\newcommand{\nc}{\newcommand}
\numberwithin{equation}{section}
\newtheorem{thm}{Theorem}[section]
\newtheorem*{thm*}{Theorem}
\newtheorem{lem}[thm]{Lemma}
\newtheorem{exam}[thm]{Example}
\newtheorem{cor}[thm]{Corollary}
\newtheorem*{cor*}{Corollary}
\theoremstyle{remark}
\newtheorem{rem}[thm]{Remark}
\newtheorem{definition}[thm]{Definition}
\newcount\cols {\catcode`,=\active\catcode`|=\active
 \gdef\Young(#1){\hbox{$\vcenter
 {\mathcode`,="8000\mathcode`|="8000
  \def,{\global\advance\cols by 1 &}%
  \def|{\cr
        \multispan{\the\cols}\hrulefill\cr
        &\global\cols=2 }%
  \offinterlineskip\everycr{}\tabskip=0pt
  \dimen0=\ht\strutbox \advance\dimen0 by \dp\strutbox
  \halign
   {\vrule height \ht\strutbox depth \dp\strutbox##
    &&\hbox to \dimen0{\hss$##$\hss}\vrule\cr
    \noalign{\hrule}&\global\cols=2 #1\crcr
    \multispan{\the\cols}\hrulefill\cr%
   }
 }$}}
 \gdef\Skew(#1:#2){\hbox{$\vcenter
 {\mathcode`,="8000\mathcode`|="8000
  \dimen0=\ht\strutbox \advance\dimen0 by \dp\strutbox
  \def\boxbeg{\vbox
    \bgroup\hrule\kern-0.4pt\hbox to\dimen0\bgroup\strut\vrule\hss$}%
  \def\boxend{$\hss\egroup\hrule\egroup}%
  \def,{\boxend\boxbeg}%
  \def|##1:{\boxend\vrule\egroup\nointerlineskip\kern-0.4pt
    \moveright##1\dimen0\hbox\bgroup\boxbeg}%
  \def\\##1\\##2:{\boxend\vrule\egroup\nointerlineskip\kern-0.4pt
    \kern ##1\dimen0\moveright##2\dimen0\hbox\bgroup\boxbeg}%
  \moveright#1\dimen0\hbox\bgroup\boxbeg#2\boxend\vrule\egroup
 }$}} }
\def\smallsquares
\nc{\gl}{\mathfrak{gl}}
\nc{\GL}{\mathfrak{GL}}
\nc{\g}{\mathfrak{g}}
\nc{\gh}{\widehat\g}
\nc{\h}{\mathfrak{h}}
\nc{\n}{\mathfrak{n}}
\nc{\la}{\lambda}
\nc{\C}{\mathbb C }
\nc{\D}{\mathbb D }
\nc{\Z}{\mathbb Z }
\nc{\N}{\mathbb N }
\nc{\R}{\mathbb R }
\nc{\Q}{\mathbb Q }
\nc{\al}{\alpha }
\nc{\bs}{{\bf s}}
\nc{\bt}{{\bf t}}
\nc{\br}{{\bf r}}
\nc{\ch}{{\mathop {\rm ch}}}
\nc{\Tr}{{\mathop {\rm Tr}\,}}
\nc{\Id}{{\mathop {\rm Id}}}
\nc{\ad}{{\mathop {\rm ad}}}
\nc{\bra}{\langle}
\nc{\ket}{\rangle}
\nc{\bx}{{\bf x}}
\nc{\pa}{\partial}
\nc{\ld}{\ldots}
\nc{\cd}{\cdots}
\nc{\gr}{\mathrm{gr}}
\nc{\ov}{\overline}
\nc{\msl}{\mathfrak{sl}}
\nc{\msp}{\mathfrak{sp}}
\nc{\mgl}{\mathfrak{gl}}
\nc{\mso}{\mathfrak{so}}
\newcommand{\ol}{\overline}
\newcommand{\bc}{{\mathbb C}}
\newcommand{\bz}{{\mathbb Z}}
\def\gr{\operatorname{gr}}
\def\wt{\operatorname{wt}}
\begin{document}
\title{FFLV polytopes for odd symplectic Lie algebras}

\author{Dmitry Rybin}
\address{Dmitry Rybin:\newline
International Laboratory for Mirror Symmetry and Automorphic Forms,\newline
National Research University Higher School of Economics, Moscow
}
\email{darybin@edu.hse.ru}

\begin{abstract}
We consider ``odd symplectic Lie algebras'' defined in terms of maximal rank skew-symmetric forms. We provide FFLV polytopes for these algebras and prove their standard properties. In particular, we obtain a new graded character formula and PBW bases for an analogue of irreducible representations in terms of lattice points of FFLV polytopes.
\end{abstract}
\maketitle

\section*{Introduction}
Feigin--Fourier--Littelmann--Vinberg (FFLV) polytopes are certain convex polytopes which appear in the study of representations of Lie algebras. There are several motivations to research FFLV polytopes. First, they provide information about the PBW filtration and PBW bases, which have important applications in algebraic geometry, representation theory and mathematical physics. Second, they give combinatorial formulas for graded characters of representations of Lie algebras.

The study of FFLV polytopes initiated with papers \cite{FFL1}, \cite{FFL2} investigating PBW filtration and bases for Lie algebras $\msl_{n}$ and $\msp_{2n}$, and proving Vinberg conjecture \cite{Vinberg}. The case of $\mso_{2n + 1}$ was handled in \cite{IM} with certain modifications in the definition of polytopes and bases. Recently a preprint providing FFLV type bases for Demazure modules has been posted \cite{Kambaso}.

In \cite{Proctor} Proctor proposed a definition of odd symplectic groups $Sp_{2n + 1}$ and defined an analogue of highest weight irreducible representations $\mathcal{V}_{0}(\la)$ for these groups. One of the important results of his work is a Gelfand--Zetlin type character formula for $\mathcal{V}_{0}(\la)$. However, his main motivation was the product formula for dimensions of $\mathcal{V}_{0}(\la)$, which is also equal to a multivariate Ehrhart polynomial of a family of order poset polytopes. Product formulas for Ehrhart polynomials of ordet poset polytopes are still not fully understood. See a survey \cite{PosetDynamics} on a single variable product formulas.

According to the work \cite{ABS} there is a bijection between lattice points of marked order polytopes (Gelfand--Zetlin type polytopes) and lattice points of the corresponding marked chain polytopes (FFLV type polytopes). Thus we can define FFLV polytopes for $\msp_{2n + 1}$ based on the work of Proctor by passing to marked chain polytopes. It is natural to ask whether these polytopes share the same properties as in $\msl_{n}$ and $\msp_{2n}$ cases. More precisely, first question is whether lattice points of marked chain polytopes of $\msp_{2n + 1}$ explicitly enumerate a PBW basis of $\gr \mathcal{V}_{0}(\la)$ and $\mathcal{V}_{0}(\la)$. Second question is whether these polytopes have the Minkowski sum property, that was proven for FFLV polytopes of $\msl_n$ and $\msp_{2n}$ in papers \cite{FFL1} and \cite{FFL2}, and further studied in \cite{Fang}, \cite{MinkowskiSums}. In this paper we affirmatively answer these questions.

Our contribution is the translation of all results obtained for $\msp_{2n}$ in \cite{FFL2} to the $\msp_{2n + 1}$ case. In particular, we introduce new marked chain polytopes coming from representation theory. We generalize the argument from \cite{FFL2} that allows us to provide PBW bases for indecomposable trace-free representations of $\msp_{2n + 1}$ and irreducible representations of $\msp_{2n}$ simultaneously.

\section*{Acknowledgements}
The author is partially supported by Laboratory of Mirror Symmetry NRU HSE, RF Government grant, ag. № 14.641.31.0001. The author thanks Evgeny Feigin for discussions and directions.

\section{Definitions}
\subsection{Lie algebras}
Let $U(\g)$ and $S(\g)$ denote the universal enveloping and symmetric algebras of a Lie algebra $\g$ respectively. One of the objects of interest is the PBW filtration. For any Lie algebra $\g$ consider the filtration on $\mathrm{U}(\g)$ with degree $s$ subspace defined as:
$$U^{s}(\g) = \mathrm{span}\left\lbrace \prod_{i = 1}^{k} x_{i} \,|\, x_{i} \in \g, 0 \leq k \leq s \right\rbrace.$$
Now assume that $\g$ has a Cartan decomposition $\n^{+} \oplus \h \oplus \n^{-}$. Assume that $V(\la)$ is a highest weight representation of $\g$ with highest weight vector $v_{\la}$. Using the action of $U(\g)$ on $V(\la)$, we can define the PBW filtration on $V(\la)$.
\begin{definition}
PBW filtration on the highest weight representation $V(\la)$ is defined as
$$V^{s}(\la) = U^{s}(\g) v_{\la} = U^{s}(\n^{-}) v_{\la}.$$
\end{definition}
The filtration on the module depends on the choice of positive roots. However, it can be shown that the following quantities do not depend on this choice.
\begin{definition} The $q$-dimension of the highest weight representation \hspace{-0.6mm}$V(\la)$\hspace{-0.59mm} is
$$\dim_q V(\la)  = \sum_{s} \dim \thinspace V^{s}(\la) / V^{s - 1}(\la) \cdot q^{s}.$$
\end{definition}
\begin{definition}
The $q$-character of the highest weight representation $V(\la)$ is $$\ch_q V(\la)  = \sum\limits_{\mu, s}  \dim V^{s}(\la)^{\mu}/ V^{s - 1}(\la)^{\mu} \cdot e^{\mu},$$
where the summation is over all weight subspaces $V(\la)^{\mu}$.
\end{definition}
\noindent Note that the associated graded space $\gr V(\la)$ can be considered as an $S(\g)$-module.

Following the notation from \cite{FFL2} we set $R^{+}$ to be the set of positive roots of $\msp_{2n}$. Let $\al_i$ and $\omega_i$ be simple roots and the fundamental weights of $\msp_{2n}$. The elements of $R^{+}$ have the form
\begin{gather*}
\al_{i,j} = \al_i + \al_{i + 1} + \dots + \al_j,\ 1 \le i \le j \le n - 1,\\
\alpha_{i, \ol{j}} = \alpha_i + \alpha_{i + 1} + \ldots +
\alpha_n + \alpha_{n - 1} + \ldots + \alpha_j, \ 1 \le i \le j \le n.
\end{gather*}
Unlike the paper \cite{FFL2} we do not identify $\al_{i, n}$ and $\al_{i, \ol{n}}$. We reserve this notation on purpose for other roots. Let $\msp_{2n} = \n^{+} \oplus \h \oplus \n^{-}$ be a Cartan decomposition. Denote by $f_{\al} \in \n^{-}_{-\al}$ the root vector of $\msp_{2n}$.

\subsection{Odd symplectic Lie algebra}
Let $V = \bc^{2n + 1}$ and let $e_1, ..., e_{2n + 1}$ be a fixed basis. Let $x_1, ..., x_{2n + 1}$ be coordinate functions. Denote by $\omega$ a maximal rank skew-symmetric form on $V$ given by $\omega = \sum_{i=1}^{n} dx_{2i - 1} \wedge dx_{2i}$. Following the work \cite{Proctor} we give a definition.
\begin{definition}
Odd symplectic group $Sp_{2n + 1}$ is a subgroup of $GL(V)$ that preserves a skew symmetric bilinear form on $V$ of maximal rank. Odd symplectic Lie algebra $\msp_{2n + 1}$ is a Lie algebra of the group $Sp_{2n + 1}$.
\end{definition}
We can assume that $\omega$ is the preserved form, then 
$$\msp_{2n + 1} \cong \bc^{2n} \rtimes (\msp_{2n} \oplus \gl_1)$$
and in a fixed basis $\msp_{2n + 1}$ has a block matrix form,
$$
\msp_{2n + 1} = \begin{bmatrix}
\msp_{2n} & 0 \\
* & z
\end{bmatrix}.
$$
Note that $\msp_{2n + 1} \not\subset \msp_{2n + 2}$. Let $E_{i, j}$ denote the usual matrix unit. Then simple roots of the subalgebra $\msp_{2n}$ can be chosen as $h_{\al_i} = E_{i, i} - E_{n + i, n + i}$. Apart from roots of the subalgebra $\msp_{2n}$ the Lie algebra $\msp_{2n + 1}$ has roots defined by elements $E_{2n + 1, n + i}$ and $E_{2n + 1, i}, \; i = 1, ..., n$. Let us denote the root of $\msp_{2n + 1}$ defined by the element $E_{2n + 1, 2n}$ as $\tilde\al_{n + 1}$. Using the identities
$$[E_{i, i} - E_{n + i, n + i}, E_{2n + 1, i}] = -E_{2n + 1, i},$$
$$[E_{i, i} - E_{n + i, n + i}, E_{2n + 1, i}] = E_{2n + 1, n + i},$$
we conclude that $E_{2n + 1, n + i}$ corresponds to the root $\al_i + ... + \al_{n - 1} - \tilde\al_{n + 1}$ and $E_{2n + 1, i}$ corresponds to the root $-\al_i - ... - \al_n - \tilde\al_{n + 1}$. We introduce the notations
$$\al_{i, n} = \al_i + ... + \al_n - \tilde\al_{n + 1},$$
$$f_{i, n} = f_{\al_{i, n}} = E_{2n + 1, i}.$$
We also define subspaces
$$F^{+} = \bigoplus\limits_{i = 1}^{n} \bc \cdot E_{2n + 1, n + i},\,\, F^{-} = \bigoplus\limits_{i = 1}^{n} \bc \cdot E_{2n + 1, i}.$$
Consider nilpotent subalgebras $\tilde\n^{\pm} = \n^{\pm} \oplus F^{\pm}$ and Cartan subalgebra 
$$\tilde\h = \h \oplus \bc \cdot E_{2n + 1, 2n + 1}.$$
We will use decomposition $\msp_{2n + 1}  = \tilde\n^{-} \oplus \tilde\h \oplus \tilde\n^{+}$ as the analogue of Cartan decomposition. To construct the analogue of irreducible representations for $\msp_{2n + 1}$ Proctor used trace-free tensors construction.
\begin{definition}
A k-tensor $v \in V^{\otimes k}$ is called trace-free with respect to $\omega$ if all of $k(k - 1)/2$ contractions of $v$ with the form $\omega \in V^{* \otimes 2}$ are equal to 0.
\end{definition}
Let $\la$ be a Young diagram with $n$ non-zero rows. Let $V^{\la} \subset V^{\otimes |\lambda|}$ be a subrepresentation of $GL(V)$ of highest weight $\la$. Denote by $\mathcal{V}_{0}^{\la}$ the subspace of trace-free tensors of $V^{\la}$. The highest weight vector $v_{\la} \in V^{\la}$ belongs to the subspace of trace-free tensors $\mathcal{V}_{0}(\la)$. It was proven in \cite{Proctor} that $\mathcal{V}_{0}(\la)$ is an indecomposable representation of $\msp_{2n + 1}$ with highest weight $\la$. The analogous description of the irreducible representations of classical groups as subspaces of trace-free tensors can be found in \cite{Weyl}.

For Young diagrams $\mu, \lambda$ denote by $\mu \prec \lambda$ the interlacing conditions: $\la_{i + 1} \leq \mu_i \leq \la_i \;\; \forall i$. Denote by $V_{\msp_{2n}}(\la)$ the irreducible representation of $\msp_{2n}$ of highest weight $\la$. The following important connection between representations of $\msp_{2n}$ and $\msp_{2n + 1}$ was given in \cite{Proctor}. 
\begin{lem}\label{BranchingRule}
The representation $\mathcal{V}_{0}(\la)$ under the restriction to subalgebra $\msp_{2n} \oplus \mgl_1$ is decomposed as 
$$\mathcal{V}_{0}(\la) = \bigoplus_{\mu \prec \la} V_{\msp_{2n}}(\mu) \otimes \mathbb{C}_{|\la| - |\mu|},$$
where $\mathbb{C}_{|\la| - |\mu|}$ is the one dimensional representation with $\mgl_1$ weight $|\la| - |\mu|$.
\end{lem}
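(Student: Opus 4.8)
The plan is to reduce the branching for $\msp_{2n+1}$ to the classical Gelfand--Tsetlin branching for $\mgl$ together with Weyl's realization of irreducible $\msp_{2n}$-modules as spaces of trace-free tensors, the bridge being that $e_{2n+1}$ spans the radical of $\omega$. Write $V=W\oplus L$ with $W=\bigoplus_{i\le 2n}\bc e_i$ and $L=\bc e_{2n+1}$; over $\msp_{2n}\oplus\mgl_1=\msp(W)\oplus\bc E_{2n+1,2n+1}$ this is $W\oplus\bc_1$, with $W$ of $\mgl_1$-weight $0$ and $\bc_1$ the trivial $\msp_{2n}$-module of $\mgl_1$-weight $1$. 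Since $\omega=\sum dx_{2i-1}\wedge dx_{2i}$ lies in $W^{*}\otimes W^{*}$, a contraction $c_{ab}\colon V^{\otimes|\la|}\to V^{\otimes|\la|-2}$ kills any pure tensor whose $a$-th or $b$-th factor lies in $L$, and on the rest it is the $\omega|_W$-contraction in slots $a,b$. Hence $c_{ab}$ preserves the $\mgl_1$-grading of $V^{\otimes|\la|}$ (number of $e_{2n+1}$-factors), and contractions of tensors sitting in distinct $\mgl_1$-homogeneous components --- indeed with distinct sets of $e_{2n+1}$-slots --- land in distinct summands and cannot cancel.

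I would assemble the answer from two classical inputs. First, the Gelfand--Tsetlin branching rule gives, as modules over $\mgl_{2n}\oplus\mgl_1$,
$$V^{\la}\cong\bigoplus_{\mu\prec\la}V^{\mu}_{GL_{2n}}\otimes\bc_{|\la|-|\mu|},$$
where $V^{\mu}_{GL_{2n}}\subset W^{\otimes|\mu|}$ is the Schur module of shape $\mu$, the condition $\mu\prec\la$ says exactly that $\la/\mu$ is a horizontal strip, and the $\mgl_1$-weight $|\la|-|\mu|$ of the $\mu$-summand is the size of that strip, i.e.\ the number of $e_{2n+1}$-slots. Second, by Weyl \cite{Weyl}, $V_{\msp_{2n}}(\mu)$ is precisely the $\omega|_W$-trace-free part of $V^{\mu}_{GL_{2n}}$. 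Combining these with Schur--Weyl duality, the space of all $\omega$-trace-free tensors in $V^{\otimes|\la|}$ decomposes, as a module over the symmetric group $S_{|\la|}$ times $\msp_{2n}\oplus\mgl_1$, into trace-free $W$-tensors occupying the $W$-slots with $e_{2n+1}$ in the remaining slots; Pieri's rule converts the induction over the possible placements of the $e_{2n+1}$-slots into a sum over horizontal strips. Intersecting with the fixed copy $V^{\la}=c_{\la}V^{\otimes|\la|}$ (a Young-symmetrizer image) and using that the intersection of an $S_{|\la|}$-stable subspace of $V^{\otimes|\la|}$ with such a copy is computed isotypic-component-wise, one gets $\mathcal{V}_{0}(\la)=V^{\la}\cap\{\text{trace-free tensors}\}=\bigoplus_{\mu\prec\la}V_{\msp_{2n}}(\mu)\otimes\bc_{|\la|-|\mu|}$, as claimed.

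The main obstacle I expect is the compatibility of the Gelfand--Tsetlin decomposition of the single copy $V^{\la}$ with the contraction operators, i.e.\ making the phrase ``occupying the $W$-slots'' precise. Concretely, one must realize the copy of $V^{\mu}_{GL_{2n}}$ inside $V^{\la}$ so that it consists of tensors with $e_{2n+1}$ in the boxes of the strip $\la/\mu$ and a shape-$\mu$ $W$-tensor in the remaining $|\mu|$ slots; granting such a model, each $c_{ab}$ restricted to that copy is either zero (a strip slot is involved) or the $\omega|_W$-contraction of the underlying $W$-tensor, so the no-cancellation observation of the first paragraph localizes the trace-free condition summand by summand, where it cuts out exactly $V_{\msp_{2n}}(\mu)$. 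A less combinatorial alternative, once the $\mgl$-branching is in hand, is to compare $\msp_{2n}\oplus\mgl_1$-characters directly, using Proctor's Gelfand--Zetlin character formula for $\mathcal{V}_{0}(\la)$ from \cite{Proctor} against the classical symplectic Weyl character formula; this trades the tensor bookkeeping for the use of that formula.
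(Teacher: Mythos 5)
The paper does not prove this lemma at all: it is stated as a quoted result of Proctor, with the reference \cite{Proctor} standing in for the argument. So there is no in-paper proof to match; what matters is whether your derivation stands on its own, and it does. Your two classical inputs (the $GL_{2n+1}\downarrow GL_{2n}\times GL_1$ branching over horizontal strips, and Weyl's identification of $V_{\msp_{2n}}(\mu)$ with the traceless part of the Schur module $\mathbb{S}_\mu(W)$ for $\mu$ with at most $n$ rows) together with the observation that $e_{2n+1}$ spans the radical of $\omega$ are exactly the right ingredients. One remark: the ``main obstacle'' you flag in the last paragraph is already dissolved by the mechanism you invoke in the paragraph before it, so you do not need an explicit slot-wise model of the Gelfand--Tsetlin copies inside $V^{\la}$. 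Concretely: decompose $V^{\otimes|\la|}=\bigoplus_{S}W^{\otimes([|\la|]\setminus S)}\otimes L^{\otimes S}$; your no-cancellation argument shows the trace-free subspace is the direct sum over $S$ of the $\omega|_W$-traceless parts, i.e.\ as an $\bigl(\msp_{2n}\oplus\mgl_1\bigr)\times S_{|\la|}$-module the $\mgl_1$-weight-$d$ traceless part is $\mathrm{Ind}_{S_{|\la|-d}\times S_d}^{S_{|\la|}}\bigl((W^{\otimes(|\la|-d)})_0\boxtimes\mathbf{1}\bigr)=\bigoplus_{\nu}V_{\msp_{2n}}(\nu)\otimes\mathrm{Ind}(M_\nu\boxtimes\mathbf{1})$ by Schur--Weyl for the traceless $W$-tensors; Pieri then extracts the $M_\la$-isotypic multiplicity as the number of horizontal strips $\la/\nu$ of size $d$, and the intersection with the single copy $c_\la V^{\otimes|\la|}$ is read off isotypic-component-wise because the traceless subspace is $S_{|\la|}$-stable. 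This gives the lemma directly, with the interlacing condition $\mu\prec\la$ appearing as the horizontal-strip condition and the $\mgl_1$-weight as the strip size. Your proposed fallback via comparison of Proctor's Gelfand--Zetlin character with the symplectic Weyl character would risk circularity (Proctor's character formula is downstream of this branching), so the tensor argument is the one to keep.
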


Proctor considered a generalization $\msp_{2n, k} = (\msp_{2n} \oplus \gl_k) \ltimes \bc^{2nk}$ with $k > 1$ given by selection of rank $2n$ skew-symmetric form in $\bc^{2n + k}$. However, the product formula and representation theory backing it exists only for four families of examples:
\begin{enumerate}
    \item Case $k = 0$ gives $\msp_{2n}$ with FFLV polytopes obtained in \cite{FFL2},
    \item Case $k = 1$ gives $\msp_{2n + 1}$ considered in this paper,
    \item Case $n = 0$ gives $\gl_k$ with FFLV polytopes obtained in \cite{FFL1},
    \item Case $n = 1$ gives $(\msp_1 \oplus \gl_k) \ltimes \bc^{2m}$.
\end{enumerate}
Poset for the case $n = 1$ is obtained from $A_k$ poset by adding one extra element. The corresponding product formula for multivariate Ehrhart polynomial of marked order (or chain) polytope for weight $\la = \sum_{i=1}^{k - 1} m_i \omega_i$ contains an additional linear factor:
$$\frac{k + \sum_{i=1}^{k - 1} i \cdot m_i}{k} \cdot \prod\limits_{i \leq j} \frac{m_i + m_{i + 1} + ... + m_j + j - i + 1}{j - i + 1}.$$

\subsection{FFLV polytopes}
Recall the definition of FFLV polytope for $\msp_{2n}$. Consider the poset formed by all positive roots $R^{+}$ of $\msp_{2n}$ (case $n = 3$ on the diagram):
$$
\begin{array}[pos]{ccccccccccccc}
&& \alpha_{1,1} & \rightarrow & \alpha_{1,2}& \rightarrow & \alpha_{1,\ol{3}}& \rightarrow  & \alpha_{1,\ol{2}}& \rightarrow & \alpha_{1,\ol{1}} &&\\
&& && \downarrow && \downarrow && \downarrow && \\
&& &  & \alpha_{2,2}& \rightarrow & \alpha_{2,\ol{3}}& \rightarrow & \alpha_{2,\ol{2}}&  &  &&\\
&& &&  && \downarrow && && && \\
&& &  &  &  & \alpha_{3,\ol{3}} &  &  &  &&
\end{array}
$$
The polytope $FFLV_{2n}(\la)$ is defined in space $\mathbb{R}_{\geq 0}^{n^2}$ with coordinates $(s_{\al})_{\al \in R^{+}}$ by explicit set of inequalities. There is one inequality for each symplectic Dyck path. Symplectic Dyck path is any maximal chain $p = (p(0), ..., p(k))$ in the root poset that starts at the root $p(0) = \al_{i, i}$ for some $i$ and ends either at $\al_{j, j}$ or at $\al_{j, \ol{j}}$ for some $j \geq i$. Let $\D_{2n}$ denote the set of all symplectic Dyck paths for $\msp_{2n}$ root poset.

\begin{equation*}
FFLV_{2n}(\la):= \bigg\{ \bs \mid \forall p \in \D_{2n}:
\begin{array}{l}
\text{ If } p(0) = \al_{i, i}, p(k) = \al_{j, j}, \text{ then }\\
\sum_{\al \in p} s_{\al} \leq m_i + \dots + m_j,\\
\text{ if } p(0) = \al_{i, i}, p(k) = \al_{j, \ol{j}}, \text{ then }\\
\sum_{\al \in p} s_{\al} \leq m_i + \dots + m_n\\
\end{array}\bigg\}.
\end{equation*}
To shorten the notation we use $s_{i,j}$ for $s_{\al_{i, j}}$, and $s_{i,\ol{j}}$ for $s_{\al_{i, \ol{j}}}$. 
\begin{exam}
The polytope $FFLV_{4}(m_1\omega_1 + m_2\omega_2)$ has four coordinates $s_{1, 1}, s_{1, \ol{2}}, s_{2, \ol{2}}, s_{1, \ol{1}} \in \mathbb{R}_{\geq 0}$ constrained by inequalities
$$s_{1, 1} \leq m_1, \;\;\; s_{2, \ol{2}} \leq m_2,$$
$$s_{1, 1} + s_{1, \ol{2}} + s_{1, \ol{1}} \leq m_1 + m_2, \;\;\; s_{1, 1} + s_{1, \ol{2}} + s_{2, \ol{2}} \leq m_1 + m_2.$$
\end{exam}

For any lattice point $\bs = (s_{\al})_{\al \in R^{+}} \in FFLV_{2n}(\la)$ denote by $f^{\bs}$ the element of the universal enveloping algebra
$$f^{\bs} = \prod\limits_{\al} f_{\al}^{s_{\al}} \in U(\n^{-}) \subset U(\msp_{2n}).$$
The product depends on the ordering of the roots $\al$. We suppose that arbitrary fixed ordering is used. Note that all orderings give equivalent elements when we pass from $U(\n^{-})$ to the associated graded $S(\n^{-})$. 

We denote the set of integral points of a polytope $P$ as $P^{\bz} \subset P$. The following result was proven in \cite{FFL2}.
\begin{thm}\label{EvenFFLV}
For dominant integral weight $\la$ the set
$$f^{\bs} v_{\la}, \bs \in FFLV^{\bz}_{2n}(\la),$$
is a basis of the irreducible representation $V_{\msp_{2n}}(\la)$ and of the associated graded space $\gr V_{\msp_{2n}}(\la)$ with respect to PBW filtration.
\end{thm}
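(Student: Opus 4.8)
The plan is to pass to the associated graded module and run the two-sided estimate that is standard for FFLV-type statements. Since $V_{\msp_{2n}}(\la)$ is generated over $U(\n^{-})$ by the highest weight vector $v_{\la}$, its symbol $\ov v_{\la}$ generates $\gr V_{\msp_{2n}}(\la)$ as an $S(\n^{-})$-module, so it is enough to show that the monomials $f^{\bs}\ov v_{\la}$ with $\bs \in FFLV^{\bz}_{2n}(\la)$ form a basis of $\gr V_{\msp_{2n}}(\la)$: a homogeneous subset of a filtered vector space whose symbols form a basis of the associated graded is automatically a basis of the original space, so the assertion for $V_{\msp_{2n}}(\la)$ follows at once. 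For this I would prove two things separately, that these monomials span $\gr V_{\msp_{2n}}(\la)$, and that there are at most $\dim V_{\msp_{2n}}(\la)$ of them.

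For the spanning part I would fix a total order on $\bz^{R^{+}}_{\ge 0}$ refining the partial order attached to symplectic Dyck paths, and show that any monomial $f^{\bt}\ov v_{\la}$ with $\bt \notin FFLV^{\bz}_{2n}(\la)$ rewrites as a combination of $f^{\bs}\ov v_{\la}$ with $\bs$ strictly smaller in this order; iterating terminates on monomials indexed by the polytope. The relations powering this straightening lie in the annihilator of $\ov v_{\la}$ in $S(\n^{-})$: the $\msl_2$-relations $f_{\al}^{\langle \la, \al^{\vee}\rangle + 1}\ov v_{\la} = 0$ for $\al \in R^{+}$, together with the elements obtained from them by the natural degenerate action of $\n^{+}$ on $S(\n^{-})$. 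For each symplectic Dyck path $p$ one extracts from these a relation whose leading term, in the chosen order, is a monomial supported on $p$ of total degree one more than the right-hand side of the inequality of $FFLV_{2n}(\la)$ attached to $p$; the combinatorial core, following \cite{FFL2}, is to prove that these Dyck path relations suffice to straighten every monomial outside the polytope, which is carried out by induction on the length of $p$. This gives $\gr V_{\msp_{2n}}(\la) = \mathrm{span}\{f^{\bs}\ov v_{\la} : \bs \in FFLV^{\bz}_{2n}(\la)\}$, and in particular $\dim V_{\msp_{2n}}(\la) \le |FFLV^{\bz}_{2n}(\la)|$.

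For the reverse bound $|FFLV^{\bz}_{2n}(\la)| \le \dim V_{\msp_{2n}}(\la)$ I would show that the weighted lattice point count $\sum_{\bs \in FFLV^{\bz}_{2n}(\la)} e^{\la - \sum_{\al} s_{\al}\al}$ equals $\ch V_{\msp_{2n}}(\la)$. Following \cite{FFL2}, this proceeds by establishing the Minkowski sum property $FFLV^{\bz}_{2n}(\la + \mu) = FFLV^{\bz}_{2n}(\la) + FFLV^{\bz}_{2n}(\mu)$ for the sets of integral points and, using the surjection onto the Cartan component $V_{\msp_{2n}}(\la) \otimes V_{\msp_{2n}}(\mu) \twoheadrightarrow V_{\msp_{2n}}(\la + \mu)$, reducing the desired identity by induction to the case of a fundamental weight $\la = \omega_i$. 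In that base case the polytope $FFLV_{2n}(\omega_i)$ is small enough to list its lattice points directly and match them, weight by weight, with the weights of $V_{\msp_{2n}}(\omega_i)$. Combining the two bounds, the monomials $f^{\bs}\ov v_{\la}$, $\bs \in FFLV^{\bz}_{2n}(\la)$, form a basis of $\gr V_{\msp_{2n}}(\la)$ and therefore of $V_{\msp_{2n}}(\la)$.

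The hard part is the spanning step: everything depends on choosing the monomial order so that each symplectic Dyck path relation contributes a controllable leading term and so that these relations collectively reach every monomial outside the polytope, and verifying this is precisely the delicate induction on Dyck paths. It is genuinely harder than the $\msl_n$ case because of the asymmetry of the $\msp_{2n}$ root poset, in particular the ``turn'' at the roots $\al_{i,\ov i}$, where a symplectic Dyck path may either continue or terminate, so that a single chain in the poset carries two different kinds of inequality.
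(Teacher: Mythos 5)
Your outline is correct, and it is in substance the original argument of \cite{FFL2} --- which is precisely the proof the paper points to for this theorem, so on the spanning side (Dyck-path relations extracted from $f_{\al}^{\langle\la,\al^\vee\rangle+1}v_\la=0$ via the degenerate $\n^{+}$-action, plus a carefully chosen monomial order) you and the paper are doing the same thing; indeed the paper's own self-contained derivation of Theorem \ref{EvenFFLV} is the specialization $s_{i,n}=0$ of its odd-case straightening for Theorem \ref{OddFFLV}. The genuine divergence is in the counting half. You propose the Minkowski-sum induction of \cite{FFL2}: prove $FFLV^{\bz}_{2n}(\la)+FFLV^{\bz}_{2n}(\mu)=FFLV^{\bz}_{2n}(\la+\mu)$, check fundamental weights by hand, and propagate linear independence through the Cartan component of tensor products. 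The paper instead imports the equality $\dim=\#\{\text{lattice points}\}$ wholesale: a Gelfand--Zetlin-type character formula (\cite{Proctor} in the odd case, classically in the even case) counts lattice points of a marked \emph{order} polytope, and the correspondence of \cite{ABS} transfers that count to the marked \emph{chain} (FFLV) polytope, so only spanning remains to be proved. Your route is self-contained but must carry the full weight of the Minkowski property and the tensor-product independence lemma; the paper's route is shorter but relies on external combinatorics. One place in your sketch where the arrow needs care: linear independence does not descend along surjections, so what the induction really uses is that the generators of the annihilator $I(\la+\mu)$ kill $v_\la\otimes v_\mu$ in $\gr V_{\msp_{2n}}(\la)\otimes\gr V_{\msp_{2n}}(\mu)$, giving a surjection $\gr V_{\msp_{2n}}(\la+\mu)\twoheadrightarrow S(\n^{-})(v_\la\otimes v_\mu)$, against which one plays the triangularity argument showing the vectors $f^{\bs+\bt}(v_\la\otimes v_\mu)$, $\bs\in FFLV^{\bz}_{2n}(\la)$, $\bt\in FFLV^{\bz}_{2n}(\mu)$, are linearly independent upstairs; also note that the weighted-count identity with $\ch V_{\msp_{2n}}(\la)$ is not formally multiplicative under Minkowski sums, so this representation-theoretic step cannot be avoided.
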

\noindent As a corollary, this result provides a combinatorial formula for the graded character. Introduce the notation 
$$\wt(\bs) = \sum_{1 \leq i \leq j < n} \al_{i, j}s_{i, j} + \sum_{1 \leq i \leq j \leq n} \al_{i, \ol{j}}s_{i, \ol{j}}, \;\; \deg(\bs) = \sum_{1 \leq i \leq j < n} s_{i, j} + \sum_{1 \leq i \leq j \leq n} s_{i, \ol{j}}.$$
Then
$$\ch_{q} V_{\msp_{2n}}(\la) = \sum_{\bs \in FFLV^{\bz}_{2n}(\la)} e^{\la - \wt(\bs)} q^{\deg(\bs)}.$$
The following results were also obtained in \cite{FFL2}.
\begin{thm}\label{EvenSubmodule}
The $S(\n^{-})$-module generated by cyclic vector $$v_{\la} \otimes v_{\mu} \in \gr V_{\msp_{2n}}(\la) \otimes \gr V_{\msp_{2n}}(\mu)$$ is isomorphic to $\gr V_{\msp_{2n}}(\la + \mu)$.
\end{thm}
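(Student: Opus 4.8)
The plan is to realise both $\gr V_{\msp_{2n}}(\la+\mu)$ and the module in question, $M:=S(\n^-)\cdot(v_\la\otimes v_\mu)\subseteq\gr V_{\msp_{2n}}(\la)\otimes\gr V_{\msp_{2n}}(\mu)$, as cyclic $S(\n^-)$-modules of weight $\la+\mu$, to produce a surjection $\gr V_{\msp_{2n}}(\la+\mu)\twoheadrightarrow M$, and then to force it to be an isomorphism by a dimension count. This recapitulates the argument of \cite{FFL2} for $\msp_{2n}$.

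\emph{Surjection.} As in \cite{FFL2}, $\gr V_{\msp_{2n}}(\la)$ and $\gr V_{\msp_{2n}}(\mu)$ carry an action of the degenerate Lie algebra $\msp_{2n}^{a}=\n^{-}\rtimes(\h\oplus\n^{+})$ (with $\n^{-}$ abelian and $\h\oplus\n^{+}$ acting by the composite of $\operatorname{ad}$ with the projection $\msp_{2n}\to\n^{-}$), hence so does the tensor product, and $v_\la\otimes v_\mu$ is annihilated by $\n^{+}$ and has weight $\la+\mu$. En route to Theorem \ref{EvenFFLV} one obtains the presentation $\gr V_{\msp_{2n}}(\nu)\cong S(\n^-)/I(\nu)$, where $I(\nu)$ is the smallest $\n^{+}$-stable ideal of $S(\n^-)$ containing $f_\al^{(\nu,\al^\vee)+1}$ for $\al\in R^{+}$. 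Since $\n^{+}$ acts on $S(\n^-)$ by $[x,-]$ inside $U(\msp_{2n}^{a})$ and kills $v_\la\otimes v_\mu$, the ideal $\operatorname{Ann}_{S(\n^-)}(v_\la\otimes v_\mu)$ is again $\n^{+}$-stable, so to obtain a surjective $S(\n^-)$-map $p\colon\gr V_{\msp_{2n}}(\la+\mu)\to M$ with $v_{\la+\mu}\mapsto v_\la\otimes v_\mu$ it suffices to show each generator $f_\al^{(\la+\mu,\al^\vee)+1}$ kills $v_\la\otimes v_\mu$. Writing $m=(\la+\mu,\al^\vee)+1$ and using $\Delta(f_\al^{m})=\sum_{i}\binom{m}{i}f_\al^{i}\otimes f_\al^{m-i}$ together with $f_\al^{(\la,\al^\vee)+1}v_\la=0$ and $f_\al^{(\mu,\al^\vee)+1}v_\mu=0$, in every summand one has $i>(\la,\al^\vee)$ or $m-i>(\mu,\al^\vee)$, so $f_\al^{m}(v_\la\otimes v_\mu)=0$.

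\emph{Dimension count.} By Theorem \ref{EvenFFLV}, $\dim\gr V_{\msp_{2n}}(\la+\mu)=|FFLV^{\bz}_{2n}(\la+\mu)|$, so it remains to prove $\dim M\ge|FFLV^{\bz}_{2n}(\la+\mu)|$. First establish the lattice Minkowski-sum property $FFLV^{\bz}_{2n}(\la)+FFLV^{\bz}_{2n}(\mu)=FFLV^{\bz}_{2n}(\la+\mu)$: the inclusion $\subseteq$ is immediate since the right-hand side of each symplectic-Dyck-path inequality is additive in $\la$, while $\supseteq$ requires an explicit greedy splitting $\bs\mapsto(\bs',\bs'')$ of a lattice point of $FFLV_{2n}(\la+\mu)$ compatible with all path constraints. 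Fixing such a splitting for every $\bs$, expanding by the Leibniz rule in $\gr V_{\msp_{2n}}(\la)\otimes\gr V_{\msp_{2n}}(\mu)$,
$$f^{\bs}(v_\la\otimes v_\mu)=\sum_{\bt'+\bt''=\bs}c^{\bs}_{\bt'}\,(f^{\bt'}v_\la)\otimes(f^{\bt''}v_\mu)$$
with positive integers $c^{\bs}_{\bt'}$, and rewriting each $f^{\bt'}v_\la$, $f^{\bt''}v_\mu$ in the FFLV bases of Theorem \ref{EvenFFLV}, one checks that for a suitable total order on exponent vectors the straightening properties underlying Theorem \ref{EvenFFLV} force $f^{\bs}(v_\la\otimes v_\mu)$ to have leading term $(f^{\bs'}v_\la)\otimes(f^{\bs''}v_\mu)$ with nonzero coefficient. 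Since $\bs\mapsto(\bs',\bs'')$ is injective these leading terms are pairwise distinct, so the vectors $f^{\bs}(v_\la\otimes v_\mu)$, $\bs\in FFLV^{\bz}_{2n}(\la+\mu)$, are linearly independent in $M$.

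Combining the two parts, $\dim\gr V_{\msp_{2n}}(\la+\mu)\ge\dim M\ge|FFLV^{\bz}_{2n}(\la+\mu)|=\dim\gr V_{\msp_{2n}}(\la+\mu)$, so all of these are equalities and the surjection $p$ is an isomorphism, giving $M\cong\gr V_{\msp_{2n}}(\la+\mu)$. I expect the main obstacle to be the second part: producing the lattice-point Minkowski decomposition and, above all, choosing the total order so that the chosen splittings control the leading monomial of $f^{\bs}(v_\la\otimes v_\mu)$ — this combinatorial core is precisely what has to be transported intact from the $\msp_{2n}$ analysis of \cite{FFL2}.
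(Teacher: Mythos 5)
This theorem is not proved in the paper at all: it is quoted as a known result of \cite{FFL2}, so there is no internal proof to compare against. Your proposal is, in outline, exactly the argument of \cite{FFL2}: the surjection $S(\n^-)/I(\la+\mu)\twoheadrightarrow M$ via the coproduct/binomial computation and $\n^{+}$-stability of the annihilator, followed by a lower bound on $\dim M$ from an order-compatible Minkowski splitting of lattice points; you correctly flag the splitting and the triangularity of leading terms as the genuinely hard combinatorial step rather than claiming to have done it.
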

\begin{thm}\label{EvenMinkowski}
We have an equality of the Minkowski sums of lattice points
$$FFLV_{2n}^{\bz}(\la) + FFLV_{2n}^{\bz}(\mu) = FFLV_{2n}^{\bz}(\la + \mu).$$
\end{thm}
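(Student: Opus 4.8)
The inclusion $FFLV_{2n}^{\bz}(\la)+FFLV_{2n}^{\bz}(\mu)\subseteq FFLV_{2n}^{\bz}(\la+\mu)$ is immediate: writing $\la=\sum m_i\omega_i$ and $\mu=\sum m_i'\omega_i$, if $\bs$ satisfies all Dyck path inequalities for $\la$ and $\bt$ satisfies all of them for $\mu$, then for each $p\in\D_{2n}$ we add the two bounds, and since the right-hand side of a Dyck path inequality is a linear function of the weight, $\bs+\bt$ satisfies the corresponding inequality for $\la+\mu$. So the whole content is the reverse inclusion $FFLV_{2n}^{\bz}(\la+\mu)\subseteq FFLV_{2n}^{\bz}(\la)+FFLV_{2n}^{\bz}(\mu)$, and that is what I would prove.

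My plan is to induct on the number of boxes $|\mu|$, uniformly in $\la$, so as to reduce to the case where $\mu$ is a fundamental weight. If $|\mu|=0$ there is nothing to do. Otherwise pick $k$ with $m_k'>0$ and put $\mu'=\mu-\omega_k$, a dominant weight with $|\mu'|<|\mu|$. Let $\br\in FFLV_{2n}^{\bz}(\la+\mu)=FFLV_{2n}^{\bz}\big((\la+\mu')+\omega_k\big)$. Granting the fundamental case in the form $FFLV_{2n}^{\bz}(\xi+\omega_k)\subseteq FFLV_{2n}^{\bz}(\xi)+FFLV_{2n}^{\bz}(\omega_k)$ with $\xi=\la+\mu'$, write $\br=\br'+\bt$ with $\br'\in FFLV_{2n}^{\bz}(\la+\mu')$ and $\bt\in FFLV_{2n}^{\bz}(\omega_k)$; then by the induction hypothesis applied to $\la$ and $\mu'$ write $\br'=\bs+\bt'$ with $\bs\in FFLV_{2n}^{\bz}(\la)$ and $\bt'\in FFLV_{2n}^{\bz}(\mu')$. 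Since $\bt'+\bt\in FFLV_{2n}^{\bz}(\mu')+FFLV_{2n}^{\bz}(\omega_k)\subseteq FFLV_{2n}^{\bz}(\mu)$ by the easy inclusion, $\br=\bs+(\bt'+\bt)$ is the decomposition we want.

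It remains to establish the base case $FFLV_{2n}^{\bz}(\xi+\omega_k)\subseteq FFLV_{2n}^{\bz}(\xi)+FFLV_{2n}^{\bz}(\omega_k)$ for every dominant $\xi$ and every $1\le k\le n$. Here I would first record that $FFLV_{2n}(\omega_k)$ is a $0/1$ polytope with a transparent description: setting $m_i=\delta_{i,k}$, every Dyck path $p$ has bound $0$ or $1$, all coordinates lying on some bound-$0$ path are forced to vanish, and every remaining coordinate lies on some bound-$1$ path, so all coordinates of a lattice point are in $\{0,1\}$. Given $\br\in FFLV_{2n}^{\bz}(\xi+\omega_k)$, note that the $(\xi+\omega_k)$-bound of a path $p$ exceeds its $\xi$-bound by some $\delta_p\in\{0,1\}$; one must produce a $0/1$ vector $\bt\in FFLV_{2n}^{\bz}(\omega_k)$, which automatically satisfies $\sum_{\al\in p}t_\al\le\delta_p$, such that in addition $\sum_{\al\in p}t_\al=1$ for every path with $\delta_p=1$ on which $\br$ attains its $(\xi+\omega_k)$-bound; then $\br-\bt$ satisfies all the $\xi$-inequalities. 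I would build the support of $\bt$ one layer of the root poset at a time, routing the required unit through the roots shared by the critical paths by an augmenting-path / Hall-type argument.

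The main obstacle is exactly this last construction. The symplectic root poset has two families of maximal chains — those ending at some $\al_{j,j}$ and those ending at some $\al_{j,\ol j}$ — and a single root typically lies on many chains of both kinds, so one has to arrange that all the critical (bound-attaining) paths are hit exactly once by a single $0/1$ vector while no other bound-$1$ path is hit twice; verifying the relevant transversal/Hall condition is the combinatorial heart of the statement, and is the symplectic analogue of the arguments behind the Minkowski sum results of \cite{FFL1} and \cite{FFL2}. As an alternative packaging one can combine Theorem \ref{EvenSubmodule}, which presents $\gr V_{\msp_{2n}}(\la+\mu)$ as the cyclic $S(\n^-)$-submodule of $\gr V_{\msp_{2n}}(\la)\otimes\gr V_{\msp_{2n}}(\mu)$ on the generator $v_\la\otimes v_\mu$, with Theorem \ref{EvenFFLV}, which makes $\{\,f^{\br}(v_\la\otimes v_\mu)\mid\br\in FFLV_{2n}^{\bz}(\la+\mu)\,\}$ a basis of it; expanding $f^{\br}(v_\la\otimes v_\mu)=\sum_{\bs+\bt=\br}\binom{\br}{\bs}\,(f^{\bs}v_\la)\otimes(f^{\bt}v_\mu)$ and reducing each factor to the monomial basis of $\gr V_{\msp_{2n}}(\la)$, respectively $\gr V_{\msp_{2n}}(\mu)$, shows that only pairs with $\bs\in FFLV_{2n}^{\bz}(\la)$, $\bt\in FFLV_{2n}^{\bz}(\mu)$, $\bs+\bt=\br$ can carry a leading term — but turning this into surjectivity of $(\bs,\bt)\mapsto\bs+\bt$ still rests on the same poset combinatorics, so the real work cannot be avoided.
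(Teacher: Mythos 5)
First, note that the paper does not prove Theorem \ref{EvenMinkowski} at all: it is imported verbatim from \cite{FFL2} (``The following results were also obtained in \cite{FFL2}''), and the only Minkowski-type argument the paper actually carries out is the deduction of Theorem \ref{OddMinkowski} \emph{from} this statement by intersecting $FFLV_{2n+2}(\la)$ with a coordinate subspace. So there is no internal proof to measure you against; your attempt has to stand on its own.

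On its own it has a genuine gap. The easy inclusion and the induction reducing to $\mu=\omega_k$ are fine (and this is indeed the standard shape of the known arguments). But the entire content then sits in the base case: given $\br\in FFLV_{2n}^{\bz}(\xi+\omega_k)$, produce a $0/1$ point $\bt\in FFLV_{2n}^{\bz}(\omega_k)$ with $\br-\bt\in FFLV_{2n}^{\bz}(\xi)$, i.e.\ a $0/1$ vector meeting every bound-saturating Dyck path with $\delta_p=1$ exactly once while itself lying in $FFLV_{2n}(\omega_k)$. You describe this as ``an augmenting-path / Hall-type argument'' and then state that verifying the transversal condition ``is the combinatorial heart of the statement'' --- which is exactly right, and exactly what is missing. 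In the symplectic poset a single root lies on many saturated paths of both types (ending at $\al_{j,j}$ and at $\al_{j,\ol j}$), and the existence of the required common transversal is not a formal consequence of anything you wrote; it is the theorem. Your fallback route through Theorems \ref{EvenSubmodule} and \ref{EvenFFLV} does not close the gap either: in \cite{FFL2} those results are established alongside (and using) the polytope combinatorics, so leaning on them here risks circularity, and in any case, as you concede, the leading-term expansion of $f^{\br}(v_\la\otimes v_\mu)$ only constrains which pairs $(\bs,\bt)$ can contribute a leading monomial --- it does not by itself give surjectivity of $(\bs,\bt)\mapsto\bs+\bt$ onto $FFLV_{2n}^{\bz}(\la+\mu)$. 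As written, the proposal is a correct reduction plus an honest statement of the unproved core, not a proof.
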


We introduce the root poset for $\msp_{2n + 1}$ that we will work with (case $n = 3$ on the diagram):
$$
\begin{array}[pos]{ccccccccccccccc}
&& \alpha_{1,1} & \rightarrow & \alpha_{1,2} & \rightarrow & \alpha_{1,3} & \rightarrow & \alpha_{1,\ol{3}}& \rightarrow  & \alpha_{1,\ol{2}}& \rightarrow & \alpha_{1,\ol{1}} &&\\
&& && \downarrow && \downarrow && \downarrow && \downarrow && \\
&& &  & \alpha_{2,2}& \rightarrow & \alpha_{2,3}&\rightarrow & \alpha_{2,\ol{3}}& \rightarrow & \alpha_{2,\ol{2}}&  &  &&\\
&& &&  && \downarrow && \downarrow && && && \\
&& &  &  &  & \alpha_{3,3} & \rightarrow & \alpha_{3,\ol{3}} &  &  &  &&
\end{array}
$$
The polytope $FFLV_{2n + 1}(\la)$ is defined in space $\R^{n(n + 1)}_{\geq 0}$ with coordinates $s_\al$. The defining inequalities are completely analogous to the $\msp_{2n}$ case. Let $\D_{2n + 1}$ be the set of symplectic Dyck paths for $\msp_{2n + 1}$ root poset. That is, each element $p \in \D_{2n + 1}$ is a maximal chain in $\msp_{2n + 1}$ root poset that starts at $p(0) = \al_{i , i}$ for some $i$ and ends either at $\al_{j, j}$ or at $\al_{j, \ol{j}}$ for some $j \geq i$.
\begin{equation*}
FFLV_{2n + 1}(\la):= \bigg\{ \bs \mid \forall p \in \D_{2n + 1}:
\begin{array}{l}
\text{ If } p(0) = \al_{i, i}, p(k) = \al_{j, j}, \text{ then }\\
\sum_{\al \in p} s_{\al} \leq m_i + \dots + m_j,\\
\text{ if } p(0) = \al_{i, i}, p(k) = \al_{j, \ol{j}}, \text{ then }\\
\sum_{\al \in p} s_{\al} \leq m_i + \dots + m_n\\
\end{array}\bigg\}.
\end{equation*}
\begin{exam}
The polytope $FFLV_{5}(m_1\omega_1 + m_2\omega_2)$ has six coordinates $s_{1, 1}, s_{1, 2}, s_{1, \ol{2}}, s_{1, \ol{1}}, s_{2, 2}, s_{2, \ol{2}} \in \mathbb{R}_{\geq 0}$ constrained by inequalities
$$s_{1, 1} \leq m_1, \;\;\; s_{2, 2} \leq m_2, \;\;\; s_{2, 2} + s_{2, \ol{2}} \leq m_2,$$
$$s_{1, 1} + s_{1, 2} + s_{2, 2} \leq m_1 + m_2, \;\;\; s_{1, 1} + s_{1, 2} + s_{1, \ol{2}} + s_{1, \ol{1}} \leq m_1 + m_2,$$
$$s_{1, 1} + s_{1, 2} + s_{1, \ol{2}} + s_{2, \ol{2}} \leq m_1 + m_2, \;\;\; s_{1, 1} + s_{1, 2} + s_{2, 2} + s_{2, \ol{2}} \leq m_1 + m_2.$$
\end{exam}
We will prove the following results.
\begin{thm}\label{OddFFLV}
Let $\la = m_1 \omega_1 + ... + m_n \omega_n$ be an integral weight of $\msp_{2n + 1}$ with $m_i \in \bz_{\geq 0}$ for all $i$. Then the set
$$f^{\bs} v_{\la}, \bs \in FFLV^{\bz}_{2n + 1}(\la),$$
is a basis of the indecomposable trace-free representation $\mathcal{V}_{0}(\la)$ and of the associated graded space $\gr \mathcal{V}_{0}(\la)$ with respect to PBW filtration.
\end{thm}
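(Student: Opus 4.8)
The plan is to split the assertion into (A) a spanning statement---the monomials $\prod_\al f_\al^{s_\al}$ with $\bs\in FFLV^{\bz}_{2n+1}(\la)$ span $\gr\mathcal{V}_0(\la)=S(\tilde\n^-)v_\la$ as a vector space---and (B) a counting statement---$\#FFLV^{\bz}_{2n+1}(\la)=\dim\mathcal{V}_0(\la)$. Granting both, (A) forces $\dim\gr\mathcal{V}_0(\la)\le\#FFLV^{\bz}_{2n+1}(\la)$, which by (B) is an equality, so those monomials form a basis of $\gr\mathcal{V}_0(\la)$; lifting through the PBW filtration, $\{f^{\bs}v_\la : \bs\in FFLV^{\bz}_{2n+1}(\la)\}$ is then a basis of $\mathcal{V}_0(\la)$ by the usual filtered-module argument. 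In particular no Minkowski-sum property of the polytopes is needed for this theorem; that enters only for the companion statements.

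For (B) I would combine Lemma~\ref{BranchingRule} with Theorem~\ref{EvenFFLV}: $\dim\mathcal{V}_0(\la)=\sum_{\mu\prec\la}\dim V_{\msp_{2n}}(\mu)=\sum_{\mu\prec\la}\#FFLV^{\bz}_{2n}(\mu)$, so it suffices to prove the purely combinatorial identity $\#FFLV^{\bz}_{2n+1}(\la)=\sum_{\mu\prec\la}\#FFLV^{\bz}_{2n}(\mu)$. I would prove this by slicing the polytope along the coordinates $(s_{1,n},\dots,s_{n,n})$ attached to the extra diagonal of the $\msp_{2n+1}$ root poset: since every symplectic Dyck path of $\msp_{2n+1}$ meets this diagonal at most once, direct inspection of the defining inequalities shows that the admissible integral values of this projection are in bijection with the diagrams $\mu\prec\la$, and that the fiber over the value corresponding to $\mu$ is $FFLV^{\bz}_{2n}(\mu)$ up to an explicit affine bijection of the ambient lattices. (Alternatively, (B) is immediate from \cite{Proctor} and \cite{ABS}: by construction $FFLV_{2n+1}(\la)$ is the marked chain polytope of the marked poset whose marked order polytope carries Proctor's Gelfand--Zetlin parametrization of $\mathcal{V}_0(\la)$, and \cite{ABS} matches the lattice points of the two polytopes.)

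For (A) I would adapt the argument of \cite{FFL2}: it suffices to show that $S(\tilde\n^-)/\mathrm{Ann}(v_\la)$ is spanned by the monomials $\prod_\al f_\al^{s_\al}$ with $\bs\in FFLV^{\bz}_{2n+1}(\la)$. The input is a supply of elements of $\mathrm{Ann}(v_\la)$: (i) for the $\msl_2$-triples of the simple roots $\al_1,\dots,\al_n$ of $\msp_{2n}$, and more generally spread out along each symplectic Dyck path $p$ of $\msp_{2n+1}$, a ``symmetrised'' relation homogeneous of total degree one more than the right-hand side attached to $p$, exactly as in \cite{FFL2}; and (ii) relations involving the new root vectors $f_{i,n}=E_{2n+1,i}$, namely $(f_{i,n})^{N_i}v_\la=0$ for suitable $N_i$ together with their spread-out forms along Dyck paths through $\al_{i,n}$, which follow from the finite-dimensionality of $\mathcal{V}_0(\la)$ and from the way $E_{2n+1,i}$ commutes with the $\msl_2$-triple of $\al_n$ (the bracket producing the root vector of $\tilde\al_{n+1}$). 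With these relations in hand, the core of the argument is a straightening law: fixing a monomial order, any monomial that violates a Dyck-path inequality is rewritten, modulo the relations and without increasing the order, as a combination of monomials that eventually all lie in the polytope; since the relations are homogeneous this rewriting is degree-preserving, so it establishes (A) layer by layer of the PBW filtration.

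The step I expect to be the main obstacle is precisely this combinatorial straightening. The $\msp_{2n+1}$ root poset is the $\msp_{2n}$ poset with an extra diagonal column $\{\al_{i,n}\}_{i=1}^n$ inserted just before the column $\{\al_{i,\ol n}\}_{i=1}^n$, so the symplectic Dyck paths are longer, and the inductive bookkeeping of \cite{FFL2}---the choice of monomial order, the verification that each rewriting step strictly decreases it, and the boundary cases where a path terminates at $\al_{n,n}$---must be redone for the new poset. As the author indicates, the induction is set up so that the same argument simultaneously reproves Theorem~\ref{EvenFFLV} for $\msp_{2n}$, which serves as an internal consistency check on the combinatorics.
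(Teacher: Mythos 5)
Your plan coincides with the paper's proof: the dimension count $\dim\mathcal{V}_0(\la)=|FFLV^{\bz}_{2n+1}(\la)|$ is taken directly from \cite{Proctor} and \cite{ABS}, and the spanning statement is proved by the \cite{FFL2} straightening law for monomials violating a Dyck-path inequality, after the reductions to paths starting at $\al_{1,1}$ and ending at $\al_{j,\ol{j}}$. The one step you leave open --- redoing the straightening combinatorics for the enlarged poset --- is exactly what the paper supplies, and it turns out to require only one new ingredient: the operator $\pa_{1,n}=\ad E_{2n+1,n+1}$ (sending $f_{1,\ol{1}}$ to $f_{1,n}$) inserted as the factor $\pa_{1,n}^{s_{\bullet,n}}$ into the operator $\Delta_1$ applied to $f_{1,\ol{1}}^{\Sigma}\in I(\la)$, with the same monomial order as in \cite{FFL2}.
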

\begin{thm}\label{OddSubmodule}
The $S(\tilde\n^{-})$-module generated by cyclic vector $$v_{\la} \otimes v_{\mu} \in \gr \mathcal{V}_{0}(\la) \otimes \gr \mathcal{V}_{0}(\mu)$$ is isomorphic to $\gr \mathcal{V}_{0}(\la + \mu)$.
\end{thm}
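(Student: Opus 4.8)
The plan is to follow closely the proof of Theorem~\ref{EvenSubmodule} in \cite{FFL2}. Write $M:=S(\tilde\n^-)(v_\la\otimes v_\mu)$ for the module in the statement; being cyclic it is $M\cong S(\tilde\n^-)/A$ with $A=\operatorname{Ann}(v_\la\otimes v_\mu)$, while in the course of proving Theorem~\ref{OddFFLV} one obtains the presentation $\gr\mathcal{V}_0(\la+\mu)\cong S(\tilde\n^-)/I(\la+\mu)$, where $I(\la+\mu)$ is the ideal generated by the FFLV straightening relations $g_{p,N}\in S(\tilde\n^-)$: these are attached to a Dyck path $p\in\D_{2n+1}$ and a threshold $N$ exceeding the associated bound $b_p(\la+\mu)$ (equal to $m_i+\dots+m_j$ or to $m_i+\dots+m_n$ according to whether $p$ ends at a node $\al_{j,j}$ or $\al_{j,\ol j}$), and they satisfy the coproduct identity $\Delta(g_{p,N})=\sum_{a+b=N}g_{p,a}\otimes g_{p,b}$ in $S(\tilde\n^-)$. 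The goal is the equality $A=I(\la+\mu)$.

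I would first establish the inclusion $I(\la+\mu)\subseteq A$ (the ``upper bound''). Since $b_p$ is linear in the weight, $b_p(\la+\mu)=b_p(\la)+b_p(\mu)$, so for $N>b_p(\la+\mu)$ every summand of $\Delta(g_{p,N})$ has first index $>b_p(\la)$ or second index $>b_p(\mu)$; as the relations $g_{p,a}v_\la=0$ for $a>b_p(\la)$ and $g_{p,b}v_\mu=0$ for $b>b_p(\mu)$ hold in $\gr\mathcal{V}_0(\la)$ and $\gr\mathcal{V}_0(\mu)$ (they are among the relations used for Theorem~\ref{OddFFLV}), it follows that $g_{p,N}(v_\la\otimes v_\mu)=\Delta(g_{p,N})(v_\la\otimes v_\mu)=0$. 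Because $A$ is an ideal, this gives $I(\la+\mu)\subseteq A$; in particular $M$ is a cyclic quotient of $\gr\mathcal{V}_0(\la+\mu)$, hence $M$ is spanned by $\{f^{\bs}(v_\la\otimes v_\mu):\bs\in FFLV^{\bz}_{2n+1}(\la+\mu)\}$ and $\dim M\le\#FFLV^{\bz}_{2n+1}(\la+\mu)=\dim\mathcal{V}_0(\la+\mu)$, the last equality by Theorem~\ref{OddFFLV}.

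For the reverse inclusion $A\subseteq I(\la+\mu)$ it suffices to prove $\dim M\ge\dim\mathcal{V}_0(\la+\mu)$, and the natural route (closest to \cite{FFL2}) is combinatorial: for each $\bs\in FFLV^{\bz}_{2n+1}(\la+\mu)$ one produces an explicit decomposition $\bs=\bt+\br$ with $\bt\in FFLV^{\bz}_{2n+1}(\la)$, $\br\in FFLV^{\bz}_{2n+1}(\mu)$, chosen extremal for the monomial order governing the straightening procedure of Theorem~\ref{OddFFLV}; then, expanding $f^{\bs}(v_\la\otimes v_\mu)=\sum_{\bt'+\br'=\bs}\bigl(\prod_{\al}\binom{s_\al}{t'_\al}\bigr)f^{\bt'}v_\la\otimes f^{\br'}v_\mu$ and rewriting each $f^{\bt'}v_\la$, $f^{\br'}v_\mu$ in the FFLV basis, the term $f^{\bt}v_\la\otimes f^{\br}v_\mu$ survives as the leading term with a nonzero multinomial coefficient; since $\bs\mapsto(\bt,\br)$ is injective, the vectors $f^{\bs}(v_\la\otimes v_\mu)$ are linearly independent, so $\dim M\ge\#FFLV^{\bz}_{2n+1}(\la+\mu)$. (Alternatively, module-theoretically: filtering $\mathcal{V}_0(\la)\otimes\mathcal{V}_0(\mu)$ by $(\mathcal{V}_0(\la)\otimes\mathcal{V}_0(\mu))^{\le s}:=\sum_{a+b\le s}\mathcal{V}_0^a(\la)\otimes\mathcal{V}_0^b(\mu)=U^s(\tilde\n^-)(v_\la\otimes v_\mu)$, one checks for the cyclic submodule $W:=U(\msp_{2n+1})(v_\la\otimes v_\mu)=U(\tilde\n^-)(v_\la\otimes v_\mu)$ that $W\cap(\mathcal{V}_0(\la)\otimes\mathcal{V}_0(\mu))^{\le s}=U^s(\tilde\n^-)(v_\la\otimes v_\mu)$, so the associated graded of $W$ for the induced filtration is $M$, whence $\dim M=\dim W$, and then $\dim W=\dim\mathcal{V}_0(\la+\mu)$ follows from $W\cong\mathcal{V}_0(\la+\mu)$, e.g.\ via the inclusion $V^{\la+\mu}\hookrightarrow V^\la\otimes V^\mu$ of $GL(V)$-representations restricting to $\mathcal{V}_0(\la+\mu)\hookrightarrow\mathcal{V}_0(\la)\otimes\mathcal{V}_0(\mu)$, $v_{\la+\mu}\mapsto v_\la\otimes v_\mu$, or via Lemma~\ref{BranchingRule}.) Either way, combining with the upper bound gives $\dim M=\dim\mathcal{V}_0(\la+\mu)=\dim\gr\mathcal{V}_0(\la+\mu)$, and since $M$ is a surjective image of $\gr\mathcal{V}_0(\la+\mu)$ of the same finite dimension, $M\cong\gr\mathcal{V}_0(\la+\mu)$.

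The main obstacle is the lower bound $\dim M\ge\dim\mathcal{V}_0(\la+\mu)$. In the $\msp_{2n}$ case of \cite{FFL2} the tensor product $V_{\msp_{2n}}(\la)\otimes V_{\msp_{2n}}(\mu)$ is completely reducible and $V_{\msp_{2n}}(\la+\mu)$ occurs with multiplicity one, so the cyclic submodule generated by $v_\la\otimes v_\mu$ is automatically the irreducible of the expected dimension; here $\mathcal{V}_0(\la)\otimes\mathcal{V}_0(\mu)$ is not semisimple, so that cyclic submodule must be controlled by hand, which is exactly where Lemma~\ref{BranchingRule} and the structure $\tilde\n^-=\n^-\ltimes F^-$ (with $F^-$ an abelian ideal whose generators raise the $\mgl_1$-weight) have to be used. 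In the combinatorial route the equivalent difficulty is to establish the Minkowski-type identity $FFLV^{\bz}_{2n+1}(\la)+FFLV^{\bz}_{2n+1}(\mu)=FFLV^{\bz}_{2n+1}(\la+\mu)$ together with a choice of extremal section compatible with the straightening order of Theorem~\ref{OddFFLV} — a statement about the enlarged odd-symplectic root poset and its Dyck paths, where the new roots $\al_{i,n}$ and vectors $f_{i,n}=E_{2n+1,i}$ have to be handled carefully.
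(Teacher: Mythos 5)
Your framework (realize $M:=S(\tilde\n^-)(v_\la\otimes v_\mu)$ as a quotient of $S(\tilde\n^-)/I(\la+\mu)\cong\gr\mathcal{V}_0(\la+\mu)$ via the coproduct argument, then match dimensions) is reasonable, but the step you yourself call ``the main obstacle'' --- the lower bound $\dim M\ge\dim\mathcal{V}_0(\la+\mu)$ --- is where the entire content of the theorem sits, and neither of your two routes closes it as written. In route (b) the filtration comparison runs in the wrong direction: setting $F^s=\sum_{a+b\le s}\mathcal{V}_0^a(\la)\otimes\mathcal{V}_0^b(\mu)$ and $W^s=U^s(\tilde\n^-)(v_\la\otimes v_\mu)$, one has $M\cong\bigoplus_s W^s/(W^s\cap F^{s-1})$ and $W^{s-1}\subseteq W^s\cap F^{s-1}$, so unconditionally you only obtain $\dim M\le\dim W$ --- yet another upper bound. (Also, your displayed identification $F^s=U^s(\tilde\n^-)(v_\la\otimes v_\mu)$ cannot hold, since for large $s$ the left side is all of $\mathcal{V}_0(\la)\otimes\mathcal{V}_0(\mu)$ while the right side is the proper submodule $W$.) The missing equality $W\cap F^{s}=W^{s}$ asserts that the PBW filtration of the trace-free Cartan component coincides with the filtration induced from the tensor product; this is essentially equivalent to the theorem and cannot be dismissed as ``one checks.'' Route (a) is in principle viable --- it is the essential-monomials argument of \cite{FFL1}/\cite{FFL2} --- but it consumes Theorem \ref{OddMinkowski} plus a leading-term/triangularity analysis on the enlarged root poset, neither of which you supply.

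The paper's proof is genuinely different and avoids all of this: it never invokes $I(\la+\mu)$, the straightening law, or the Minkowski property, and in fact appears before Theorems \ref{OddFFLV} and \ref{OddMinkowski} are proved. Using Theorem \ref{ExtraThm} (itself a consequence of the branching rule, Lemma \ref{BranchingRule}), one decomposes $\gr\mathcal{V}_0(\la)\otimes\gr\mathcal{V}_0(\mu)$ into $S(\n^-)$-summands $\gr V_{\msp_{2n}}(\la-\tilde\la)\otimes\gr V_{\msp_{2n}}(\mu-\tilde\mu)$; for each $\tilde\nu\in\delta(\la+\mu)$ the vector $f^{\tilde\nu}(v_\la\otimes v_\mu)$ expands as a sum, over $\tilde\la+\tilde\mu=\tilde\nu$, of vectors projecting to nonzero multiples of $v_{\la-\tilde\la}\otimes v_{\mu-\tilde\mu}$ in distinct summands, and the already-known even case (Theorem \ref{EvenSubmodule}) shows each such vector generates a copy of $\gr V_{\msp_{2n}}(\la+\mu-\tilde\nu)$; summing over $\tilde\nu$ reassembles $\gr\mathcal{V}_0(\la+\mu)$. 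In short, the lower bound you are missing is obtained for free by reducing along $F^-$ to the $\msp_{2n}$ statement, which is the one reduction your proposal does not exploit; if you want to complete your argument, replacing your lower-bound step by this branching reduction is the most economical fix.
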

\begin{thm}\label{OddMinkowski}
We have an equality of the Minkowski sums of sets
$$FFLV_{2n + 1}^{\bz}(\la) + FFLV_{2n + 1}^{\bz}(\mu) = FFLV_{2n + 1}^{\bz}(\la + \mu).$$
\end{thm}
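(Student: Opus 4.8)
The plan is to adapt the combinatorial argument of \cite{FFL2}. The inclusion $FFLV^{\bz}_{2n+1}(\la)+FFLV^{\bz}_{2n+1}(\mu)\subseteq FFLV^{\bz}_{2n+1}(\la+\mu)$ carries no content: for $p\in\D_{2n+1}$ the right-hand side of the inequality attached to $p$, namely $m_i+\dots+m_j$ or $m_i+\dots+m_n$, is additive in the weight, so summing the inequalities for $\la$ and for $\mu$ along $p$ gives the one for $\la+\mu$. The content is the reverse inclusion. Fix $\br\in FFLV^{\bz}_{2n+1}(\la+\mu)$; the goal is to write $\br=\bs+\bt$ with $\bs\in FFLV^{\bz}_{2n+1}(\la)$ and $\bt\in FFLV^{\bz}_{2n+1}(\mu)$. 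Since the two inclusions together make the Minkowski operation transitive, it suffices to treat $\mu=\omega_k$ a fundamental weight and induct on $\sum_i m'_i$; this also makes $FFLV_{2n+1}(\omega_k)$ a $0/1$ polytope, though the argument below does not need that simplification.

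For the decomposition I would take $\bs$ to be an element of $FFLV^{\bz}_{2n+1}(\la)$ with $\bs\le\br$ coordinatewise and with $\sum_\al s_\al$ maximal, and put $\bt=\br-\bs\ge 0$. The claim is $\bt\in FFLV^{\bz}_{2n+1}(\mu)$. Suppose not, and let $p\in\D_{2n+1}$ witness a violated $\mu$-inequality. Writing $c_\la(p)$ for the right-hand side of the $\la$-inequality along $p$, the violation together with $\br\in FFLV^{\bz}_{2n+1}(\la+\mu)$ and additivity gives $\sum_{\al\in p}s_\al\le c_\la(p)-1$, so that inequality is slack for $\bs$; and since $\sum_{\al\in p}(r_\al-s_\al)>0$ some $\al_0\in p$ has $s_{\al_0}<r_{\al_0}$. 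By maximality of $\bs$, raising the $\al_0$-coordinate by $1$ breaks a defining inequality, so some $q\in\D_{2n+1}$ through $\al_0$ is tight: $\sum_{\al\in q}s_\al=c_\la(q)$. Now exchange $p$ and $q$ at $\al_0$: splitting each into the pieces up to and after $\al_0$ and recombining produces chains $p'=q^{\le}\cdot p^{>}$ and $q'=p^{\le}\cdot q^{>}$. Because the row index is nondecreasing along every symplectic Dyck path, and the endpoints (of type $\al_{j,j}$ or $\al_{j,\ol j}$) are the poset elements allowed to terminate a Dyck path, both $p'$ and $q'$ are again symplectic Dyck paths of $\msp_{2n+1}$; moreover $c_\la(p')+c_\la(q')=c_\la(p)+c_\la(q)$, which comes down to the elementary multiset identities on the index intervals $[i,j]$ (endpoint $\al_{j,j}$) and $[i,n]$ (endpoint $\al_{j,\ol j}$) cut out by the start and endpoint of a path, and must be checked for each combination of endpoint types. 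Since the multiset of roots on $p'\cup q'$ equals that on $p\cup q$, we get $\sum_{\al\in p'}s_\al+\sum_{\al\in q'}s_\al=\sum_{\al\in p}s_\al+\sum_{\al\in q}s_\al$, so the slack on $p$ is inherited by $p'$ or by $q'$; choosing $p$ minimal (in length, or in $\#\{\al\in p:s_\al<r_\al\}$) and iterating, one is forced either into a genuine violation of a $\la$-inequality by $\bs$, contradicting $\bs\in FFLV^{\bz}_{2n+1}(\la)$, or into a strictly smaller witness, contradicting minimality. Hence $\bt\in FFLV^{\bz}_{2n+1}(\mu)$ and $\br=\bs+\bt$.

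I expect the main obstacle to be making the path-exchange descent actually terminate — this is the real technical heart, already for $\msp_{2n}$ in \cite{FFL2}, and the extremal invariant must be chosen so every exchange makes measurable progress. What is new here is verifying that the enlargement of the root poset by the diagonal roots $\al_{i,n}$ and the extra covers $\al_{i,j}\to\al_{i+1,j}$ ($j\le n$) leaves all of this intact: that splicings passing through the new roots remain legitimate Dyck paths, and that, since $c_\la(p)$ depends only on the starting row of $p$ and the type of its endpoint and not on the intermediate roots, the additivity identities are unchanged. As an alternative I considered reducing to $\msp_{2n}$: the branching Lemma~\ref{BranchingRule} strongly suggests a fibration of $FFLV_{2n+1}(\la)$, via the coordinates $s_{i,n}$, over a box of interlacing data with $FFLV_{2n}$-type fibers, whence the claim would follow from Theorem~\ref{EvenMinkowski}; but the marked-chain transfer map of \cite{ABS} behind such a picture is only piecewise linear, so squaring it with Minkowski addition is itself delicate, and I would pursue the direct combinatorial route first.
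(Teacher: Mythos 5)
Your proposal attempts a direct path-exchange argument, and the gap is exactly where you flag it: the descent is not shown to make progress or terminate, and as written it does not. After one exchange you know only that $p'$ or $q'$ carries slack for $\bs$ with respect to $\la$; but a slack path is not by itself a contradiction (slack paths are everywhere), and the new slack path need not contain any root with $s_\al<r_\al$, so you cannot re-enter the argument on it. The quantity you propose to decrease (``length of $p$'' or $\#\{\al\in p: s_\al<r_\al\}$) is not shown to decrease under the splice, and the dichotomy ``either a violated $\la$-inequality or a smaller witness'' is asserted, not derived. There is a second, unacknowledged gap upstream: you have not shown that \emph{every} coordinatewise-maximal $\bs\le\br$ in $FFLV^{\bz}_{2n+1}(\la)$ works, and in \cite{FFL1,FFL2} the decomposition is in fact built by carefully peeling off a fundamental-weight contribution supported on a chosen transversal of the violated paths, not by a bare greedy choice. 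So the proposal is a plausible programme (the theorem is true and exchange arguments of this type can be pushed through for these particular posets), but the entire technical content is missing.

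The paper sidesteps all of this with a reduction you did not consider: it goes \emph{up} to $\msp_{2n+2}$ rather than down to $\msp_{2n}$. The $\msp_{2n+1}$ root poset is precisely the $\msp_{2n+2}$ root poset with the column $\{\al_{i,\ol{n+1}}\}$ deleted, and for a weight $\la$ with $m_{n+1}=0$ one checks that
$$FFLV_{2n+1}(\la)=FFLV_{2n+2}(\la)\cap\{s_{i,\ol{n+1}}=0\ \forall i\}$$
as polytopes in $\R^{n(n+1)}$. Given $\br$ in this face with $\br\in FFLV^{\bz}_{2n+2}(\la+\mu)$, Theorem~\ref{EvenMinkowski} yields $\br=\bs+\bt$ with $\bs\in FFLV^{\bz}_{2n+2}(\la)$, $\bt\in FFLV^{\bz}_{2n+2}(\mu)$, and since $s_{i,\ol{n+1}},t_{i,\ol{n+1}}\ge 0$ sum to $0$ they both vanish, so the decomposition lands back in the face. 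This is a two-line proof once the identification of the face is made; your instinct that a reduction via the branching rule to $\msp_{2n}$ is delicate was correct, but the linear, coordinate-section reduction to $\msp_{2n+2}$ has none of those problems.
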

To prove these results we first demonstrate basic lemmas about the structure of representations $\mathcal{V}_{0}(\la)$. In particular, we obtain a simpler basis given by monomials from $U(\tilde\n^{-})$. The proof strategy of Theorem \ref{EvenFFLV} from \cite{FFL2} requires modifications in order to be applied to Theorem \ref{OddFFLV}. We show how to modify the strategy. In fact, our modification is a slight generalization because the Theorem \ref{EvenFFLV} follows from our results.

\begin{rem}
In the work \cite{Proctor} the highest weight $\la$ of $\mathcal{V}_{0}(\la)$ has height up to $n + 1$. However, representations with $\la_{n + 1} \neq 0$ contribute no non-trivial statements since they are obtained from those with $\la_{n + 1} = 0$ by tensoring with $\mgl_1$ representation $\bc_{|\la_{n + 1}|}$.
\end{rem}

\section{Basic structure of $\mathcal{V}_{0}(\la)$}
The branching rule given by Lemma \ref{BranchingRule} implies that $\msp_{2n}$ subrepresentations of $\mathcal{V}_{0}(\la)$ are enumerated by tuples $$(\tilde\mu_1, ..., \tilde\mu_n) \in \bz^{n},\; 0 \leq \tilde\mu_i \leq m_i \;\; \forall i.$$
Let $\delta(\la)$ denote the set of such tuples. We also denote by $f^{\tilde\mu}$ the monomial
$$f^{\tilde\mu} = \prod\limits_{i = 1}^{n} E_{2n + 1, i}^{\tilde\mu_i} \in U(\tilde\n^{-}).$$

As a result of this section we will prove the following claim.
\begin{thm}\label{ExtraThm}
Let $\la = m_1 \omega_1 + ... + m_n \omega_n$ be an integral weight of $\msp_{2n + 1}$ with $m_i \geq 0$ for all $i$. Then the set
$$f^{\bs}f^{\tilde\mu} v_{\la},\; \tilde\mu \in \delta(\la),\; \bs \in FFLV^{\bz}_{2n}(\la - \tilde\mu),$$
is a basis of the indecomposable trace-free representation $\mathcal{V}_{0}(\la)$ and of the associated graded space $\gr \mathcal{V}_{0}(\la)$ with respect to PBW filtration.
\end{thm}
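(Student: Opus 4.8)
The plan is to deduce everything from the $\msp_{2n}$ case (Theorem~\ref{EvenFFLV}) together with the branching rule of Lemma~\ref{BranchingRule}. First I would fix the bijection $\mu\leftrightarrow\tilde\mu$, $\tilde\mu_i=\la_i-\mu_i$, between Young diagrams $\mu\prec\la$ and elements of $\delta(\la)$, so that Lemma~\ref{BranchingRule} reads $\mathcal{V}_{0}(\la)=\bigoplus_{\tilde\mu\in\delta(\la)}W_{\tilde\mu}$ as $\msp_{2n}\oplus\mgl_1$-modules, where $W_{\tilde\mu}\cong V_{\msp_{2n}}(\la-\tilde\mu)\otimes\bc_{|\tilde\mu|}$ and $|\tilde\mu|=\tilde\mu_1+\dots+\tilde\mu_n$. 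Since $f^{\tilde\mu}v_\la$ has $\mgl_1$-weight $|\tilde\mu|$, each monomial $f^{\bs}f^{\tilde\mu}v_\la$ lies in the $\mgl_1$-weight-$|\tilde\mu|$ graded piece $\bigoplus_{|\tilde\nu|=|\tilde\mu|}W_{\tilde\nu}$, and by Theorem~\ref{EvenFFLV} the number of pairs $(\tilde\mu,\bs)$ in the statement is $\sum_{\tilde\mu\in\delta(\la)}|FFLV^{\bz}_{2n}(\la-\tilde\mu)|=\sum_{\tilde\mu\in\delta(\la)}\dim V_{\msp_{2n}}(\la-\tilde\mu)=\dim\mathcal{V}_{0}(\la)=\dim\gr\mathcal{V}_{0}(\la)$. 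Hence it is enough to prove that the proposed set spans $\mathcal{V}_{0}(\la)$ and that it spans $\gr\mathcal{V}_{0}(\la)$ (the latter actually implies the former).

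The key structural input — one of the ``basic lemmas'' of this section — is the commutation identity
\[
e_{\al_k}f^{\tilde\mu}v_\la=-\tilde\mu_k\,f^{\tilde\mu-\mathbf e_k+\mathbf e_{k+1}}v_\la\quad(1\le k<n),\qquad e_{\al_n}f^{\tilde\mu}v_\la=0,
\]
where $\mathbf e_i$ are the coordinate vectors of $\bz^n$; it follows from $[e_{\al_k},E_{2n+1,j}]=-\delta_{jk}E_{2n+1,k+1}$ together with $F^{+}v_\la=0$ and the fact that $F^{+}$ commutes with $F^{-}$ (so that every term in which a generator of $F^{+}$ survives to the right kills $v_\la$). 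Two consequences. (a) The $W_{\tilde\mu}$-component of $f^{\tilde\mu}v_\la$ is a \emph{nonzero} multiple of the highest weight vector $w_{\tilde\mu}$ of $W_{\tilde\mu}$: it has weight $\la-\tilde\mu$, which is the highest weight of $W_{\tilde\mu}$, and its nonvanishing is read off the explicit trace-free tensor model, where the proportionality constant is an explicit nonzero product built from the $\la_i,\la_{i+1},\tilde\mu_i$; moreover all the remaining components of $f^{\tilde\mu}v_\la$ automatically lie in those $W_{\tilde\nu}$ with $\la-\tilde\nu$ strictly larger than $\la-\tilde\mu$ in the dominance order, since $\la-\tilde\mu$ must be a weight of $V_{\msp_{2n}}(\la-\tilde\nu)$. (b) Because the right-hand side of the displayed identity has strictly smaller PBW-degree than $|\tilde\mu|+1$, induction on the height of $\al$ shows that in $\gr\mathcal{V}_{0}(\la)$ the class $\overline{f^{\tilde\mu}}\,\bar v_\la$ is annihilated by every $\bar e_{\al}$, $\al\in R^{+}$; combined with the analogous vanishing of $\bar f_{\al}^{\langle\la-\tilde\mu,\al^{\vee}\rangle+1}\cdot\overline{f^{\tilde\mu}}\bar v_\la$ (which comes from the corresponding relations on the $\msp_{2n}$-submodules and the same degree bookkeeping), $\overline{f^{\tilde\mu}}\bar v_\la$ behaves like a highest weight vector of weight $\la-\tilde\mu$, so the cyclic $S(\n^{-})$-module it generates is a quotient of $\gr V_{\msp_{2n}}(\la-\tilde\mu)$.

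The spanning is then a downward induction on $\tilde\mu\in\delta(\la)$ with respect to the dominance order on $\la-\tilde\mu$ (equivalently, inside each $\mgl_1$-degree, on the finite poset $\{\tilde\nu\in\delta(\la):|\tilde\nu|=|\tilde\mu|\}$). For $\tilde\mu=0$ the monomials are $f^{\bs}v_\la$, $\bs\in FFLV^{\bz}_{2n}(\la)$, which span $W_0=V_{\msp_{2n}}(\la)$ by Theorem~\ref{EvenFFLV}. For general $\tilde\mu$, write $f^{\tilde\mu}v_\la=c_{\tilde\mu}w_{\tilde\mu}+r_{\tilde\mu}$ with $c_{\tilde\mu}\ne0$ and $r_{\tilde\mu}\in\bigoplus_{\la-\tilde\nu>\la-\tilde\mu}W_{\tilde\nu}$; applying $f^{\bs}$ for $\bs\in FFLV^{\bz}_{2n}(\la-\tilde\mu)$, the vectors $f^{\bs}(c_{\tilde\mu}w_{\tilde\mu})$ span $W_{\tilde\mu}$ by Theorem~\ref{EvenFFLV}, while the vectors $f^{\bs}r_{\tilde\mu}$ lie in $\bigoplus_{\la-\tilde\nu>\la-\tilde\mu}W_{\tilde\nu}$, which by the inductive hypothesis is already contained in the span of the monomials for those $\tilde\nu$. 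Hence $W_{\tilde\mu}$, and therefore all of $\mathcal{V}_{0}(\la)$, lies in the span of the proposed set, which is then a basis by the cardinality count. The identical induction, with $W_{\tilde\mu}$ replaced by $S(\n^{-})\cdot\overline{f^{\tilde\mu}}\bar v_\la$ and Theorem~\ref{EvenFFLV} by its $\gr$-version (here consequence (b) is used to know this cyclic module is a quotient of $\gr V_{\msp_{2n}}(\la-\tilde\mu)$, hence spanned by the $f^{\bs}\overline{f^{\tilde\mu}}\bar v_\la$, $\bs\in FFLV^{\bz}_{2n}(\la-\tilde\mu)$), proves the statement for $\gr\mathcal{V}_{0}(\la)$.

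The main obstacle is consequence (a) of the structural lemma, i.e. showing that $f^{\tilde\mu}v_\la$ has a nonzero projection onto $W_{\tilde\mu}$ and, in the graded setting, that $\overline{f^{\tilde\mu}}\bar v_\la$ is genuinely highest-weight-like and of the expected PBW-degree: the vectors $f^{\tilde\mu}v_\la$ are \emph{not} $\msp_{2n}$-highest weight vectors in $\mathcal{V}_{0}(\la)$ — they become so only after passing to the associated graded, where the lower-degree correction terms produced by the displayed identity disappear — so pinning down the constant $c_{\tilde\mu}$ and the second family of relations in (b) requires working directly with Proctor's realization of $\mathcal{V}_{0}(\la)$ as trace-free tensors.
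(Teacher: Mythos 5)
Your overall architecture coincides with the paper's: reduce everything to the claim that for every $\tilde\mu\in\delta(\la)$ the projection of $f^{\tilde\mu}v_\la$ onto the component $V_{\msp_{2n}}(\la-\tilde\mu)$ is nonzero, run a downward induction on the dominance order using the multiplicity-freeness of the branching rule and Theorem~\ref{EvenFFLV}, and conclude linear independence from the count $\dim\mathcal{V}_0(\la)=\sum_{\tilde\mu\in\delta(\la)}|FFLV^{\bz}_{2n}(\la-\tilde\mu)|$. That part is sound (it is essentially Lemma~\ref{Lemma1} of the paper), and your commutation identity $e_{\al_k}f^{\tilde\mu}v_\la=-\tilde\mu_k\,f^{\tilde\mu-\mathbf e_k+\mathbf e_{k+1}}v_\la$ is correct.

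The genuine gap is precisely the point you yourself flag as ``the main obstacle'': the nonvanishing of the projection of $f^{\tilde\mu}v_\la$ onto $W_{\tilde\mu}$. You assert that the proportionality constant ``is an explicit nonzero product built from the $\la_i,\la_{i+1},\tilde\mu_i$'' readable off the trace-free tensor model, but you never produce that computation, and it is not routine: one must control the image of $E_{2n+1,1}^{\tilde\mu_1}\cdots E_{2n+1,n}^{\tilde\mu_n}v_\la$ under the Young symmetrizer and separate it from the other isotypic components $W_{\tilde\nu}$ with $|\tilde\nu|=|\tilde\mu|$, and a priori nothing you have written rules out the constant vanishing for some $\tilde\mu$. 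The paper closes this gap without computing any constant. It first proves (Lemma~\ref{Lemma2}) that $v_\la$ is cyclic in $\mathcal{V}_0(\la)$, by restricting to the isotropic subspace $W=\langle e_1,\dots,e_n,e_{2n+1}\rangle$: since $\omega$ vanishes on $W$, the space $W^{\otimes|\la|}$ is automatically trace-free, $W^{\otimes|\la|}\cap V^\la=c_\la(W^{\otimes|\la|})$ is an irreducible $\gl(W)$-module containing every $v_\mu$, and a subalgebra of $\msp_{2n+1}$ contains the lower-triangular part of $\gl(W)$. Cyclicity gives $U(\n^-)U(F^-)v_\la=\mathcal{V}_0(\la)$; writing $v_\mu=\sum_j n_jf_jv_\la$ with $n_j\in U(\n^-)$, $f_j\in U(F^-)$, the fact that $v_\mu\notin\n^-\cdot U(\n^-)\mathcal{V}_0(\la)$ forces some $n_j=1$, and the weight forces the corresponding $f_j$ to be a scalar multiple of $f^{\la-\mu}$ --- which is exactly the required nonvanishing. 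You should either carry out your explicit tensor computation in full or substitute this softer cyclicity argument; as written, the crux of the theorem is asserted rather than proved.
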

Due to isomorphism of $\msp_{2n}$ representations (Lemma \ref{BranchingRule})
$$\mathcal{V}_{0}(\la) \cong \bigoplus_{\mu \prec \la} V_{\msp_{2n}}(\mu),$$
it is enough to prove the spanning property. The linear independence will follow from dimension considerations.

We separate the proof into several lemmas. First, we prove that spanning property follows from non-vanishing of projections of certain vectors on $V_{\msp_{2n}}(\mu)$. Second, we prove that the vector $v_{\la}$ is cyclic in $\mathcal{V}_0(\la)$. Finally, we show that Theorem \ref{ExtraThm} follows from these statements.

\begin{lem}\label{Lemma1}
Suppose the projection of $f^{\tilde\mu}v_{\la}$ on $V_{\mathfrak{sp}_{2n}}(\la - \tilde\mu)$ is non-zero for all $\tilde\mu \in \delta(\la)$. Then Theorem \ref{ExtraThm} is true.
\end{lem}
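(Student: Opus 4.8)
The plan is to run the standard FFLV dimension-counting argument: identify $\gr \mathcal{V}_0(\la)$ with a cyclic $S(\tilde\n^-)$-module, produce an explicit spanning set of the asserted shape, and then match cardinalities. First I would note that, since $\gr \mathcal{V}_0(\la)$ is generated by the image $\bar v_\la$ as an $S(\tilde\n^-)$-module (this cyclicity is flagged as one of the two ingredients proved separately in this section, so I may assume it here), every element of $\gr\mathcal{V}_0(\la)$ is a linear combination of monomials $\bar f^{\bt}\bar f^{\tilde\nu}\bar v_\la$ where $\bar f^{\bt}\in S(\n^-)$ runs over PBW monomials in the $\msp_{2n}$ root vectors and $\bar f^{\tilde\nu}$ over monomials in the $E_{2n+1,i}$; monomials in the $E_{2n+1,n+i}\in F^+$ can be pushed past $\bar v_\la$ using $[\,\tilde\n^-,\tilde\n^-\,]$-relations in the graded algebra, or absorbed, so only $F^-$ and $\n^-$ survive. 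Then I would invoke Theorem \ref{EvenFFLV}: within the $\msp_{2n}$-module generated by a given vector, the FFLV monomials $f^{\bs}$, $\bs\in FFLV_{2n}^{\bz}(\cdot)$, already span. The weight of $\bar f^{\tilde\mu}\bar v_\la$ under $\msp_{2n}\oplus\gl_1$ lies in the $\gl_1$-graded piece $|\la|-|\tilde\mu|$ with $\msp_{2n}$-highest component $\la-\tilde\mu$; by the branching rule (Lemma \ref{BranchingRule}) the $\gl_1$-weight-$(|\la|-|\tilde\mu|)$ part of $\mathcal{V}_0(\la)$ is exactly $\bigoplus_{\tilde\nu:\ |\tilde\nu|=|\tilde\mu|}V_{\msp_{2n}}(\la-\tilde\nu)$, and one argues inductively on $|\tilde\mu|$ that the family $\{\bar f^{\tilde\nu}\bar v_\la\}$ together with its $S(\n^-)$-orbit already fills each summand. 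This is where the hypothesis enters: if the projection of $\bar f^{\tilde\mu}v_\la$ onto $V_{\msp_{2n}}(\la-\tilde\mu)$ is nonzero, then that projection is a nonzero highest-weight vector of $V_{\msp_{2n}}(\la-\tilde\mu)$ (its $\msp_{2n}$-weight is extremal in the relevant graded piece, hence killed by $\n^+$), so applying Theorem \ref{EvenFFLV} to it yields all of $\gr V_{\msp_{2n}}(\la-\tilde\mu)$ via $f^{\bs}$, $\bs\in FFLV_{2n}^{\bz}(\la-\tilde\mu)$.

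Concretely, the induction on $d=|\tilde\mu|$ goes as follows. For $d=0$ the claim is Theorem \ref{EvenFFLV} for $V_{\msp_{2n}}(\la)$. For the inductive step, fix $\tilde\mu$ with $|\tilde\mu|=d$. The vector $\bar f^{\tilde\mu}\bar v_\la$ lives in the $\gl_1$-graded piece indexed by $-d$, which as an $\msp_{2n}$-module is $\bigoplus_{|\tilde\nu|=d}V_{\msp_{2n}}(\la-\tilde\nu)$. Its projection onto the summand $V_{\msp_{2n}}(\la-\tilde\mu)$ is nonzero by hypothesis; projections onto summands $V_{\msp_{2n}}(\la-\tilde\nu)$ with $\la-\tilde\nu$ not below $\la-\tilde\mu$ in dominance vanish for weight reasons, and the remaining lower summands are handled because they are already covered — either by a different $\tilde\nu$ with $|\tilde\nu|=d$ whose projection is the dominant one, or inductively one shows the span of all $\{\bar f^{\tilde\nu}\bar v_\la, |\tilde\nu|=d\}$ under $S(\n^-)$ is exactly $\bigoplus_{|\tilde\nu|=d}\gr V_{\msp_{2n}}(\la-\tilde\nu)$ by downward induction on dominance within the fixed $\gl_1$-degree. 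Feeding the nonzero highest-weight projection into Theorem \ref{EvenFFLV} then produces $\{f^{\bs}\bar f^{\tilde\mu}\bar v_\la : \bs\in FFLV_{2n}^{\bz}(\la-\tilde\mu)\}$ as a spanning set of $\gr V_{\msp_{2n}}(\la-\tilde\mu)$. Collecting over all $\tilde\mu\in\delta(\la)$ gives that $\{f^{\bs}f^{\tilde\mu}v_\la : \tilde\mu\in\delta(\la),\ \bs\in FFLV_{2n}^{\bz}(\la-\tilde\mu)\}$ spans $\gr\mathcal{V}_0(\la)$, hence $\mathcal{V}_0(\la)$.

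Finally I would close the gap between spanning and basis by counting. By Lemma \ref{BranchingRule}, $\dim\mathcal{V}_0(\la)=\sum_{\mu\prec\la}\dim V_{\msp_{2n}}(\mu)$, and reparametrizing $\mu=\la-\tilde\mu$ this equals $\sum_{\tilde\mu\in\delta(\la)}\dim V_{\msp_{2n}}(\la-\tilde\mu)=\sum_{\tilde\mu\in\delta(\la)}\#FFLV_{2n}^{\bz}(\la-\tilde\mu)$ by Theorem \ref{EvenFFLV}. So the proposed spanning set has exactly $\dim\mathcal{V}_0(\la)$ elements, forcing it to be a basis of both $\mathcal{V}_0(\la)$ and $\gr\mathcal{V}_0(\la)$; the statement about the PBW filtration being the one refined by $\deg$ comes along automatically since the $f^{\bs}f^{\tilde\mu}$ are PBW monomials. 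The main obstacle I anticipate is the bookkeeping in the inductive step: making rigorous that, inside a single $\gl_1$-graded slice, the vectors $\bar f^{\tilde\nu}\bar v_\la$ and their $\n^-$-descendants reach \emph{every} summand $V_{\msp_{2n}}(\la-\tilde\nu)$ and not merely the dominant ones — this requires either a careful dominance induction or an argument that the "off-diagonal" pieces are recovered from other $\tilde\nu$'s, and one must be sure no summand is missed. The interchange of $F^+$-monomials to the left of $\bar v_\la$ in $S(\tilde\n^-)$ also needs a line of justification, but it is routine given the semidirect-product structure $\msp_{2n+1}\cong\bc^{2n}\rtimes(\msp_{2n}\oplus\gl_1)$.
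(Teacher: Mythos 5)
Your plan is essentially the paper's own argument: a decreasing induction on the dominance order of the highest weights $\la-\tilde\mu$, using multiplicity-freeness of the restriction to $\msp_{2n}$ and Theorem \ref{EvenFFLV} for each branching summand, closed off by the dimension count coming from Lemma \ref{BranchingRule}. One direction is stated backwards, though: since $f^{\tilde\mu}v_\la$ has $\msp_{2n}$-weight $\la-\tilde\mu$, its projection onto $V_{\msp_{2n}}(\la-\tilde\nu)$ vanishes unless $\la-\tilde\nu$ \emph{dominates} $\la-\tilde\mu$, so the possibly nonzero cross-terms sit in \emph{higher} summands, not lower ones --- which is exactly why the downward induction starting from $\tilde\mu=0$ closes, whereas as written (``the remaining lower summands are handled because they are already covered'') it would not. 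Also, the appeal to cyclicity of $v_\la$ is unnecessary here (and is not among the lemma's hypotheses): the branching decomposition already reduces the spanning claim to spanning each summand separately.
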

\begin{proof}
Let us prove that the set of vectors from Theorem \ref{ExtraThm} spans all subrepresentations $V_{\mathfrak{sp}_{2n}}(\mu)$, $\mu \prec \la$. The proof will go by decreasing induction on $\mu$, where the order $\prec$ on the highest weights is the standard one. 

Induction base: if $\mu$ is maximal with respect to $\prec$, then $\mu = \la$ and $f^{\la - \la}v_{\la} = v_{\la}$. The spanning property follows from the spanning property of the set $f^{\bs} v_{\la}, \; \bs \in FFLV^{\bz}_{2n}(\la)$.

Induction step: by assumption the vector $f^{\lambda - \mu}v_{\lambda}$ is non-zero. Its weight with respect to the subalgebra $\msp_{2n} \oplus \mathfrak{gl}_1$ is equal to $(\mu, |\la| - |\mu|)$. By definition of $\prec$ such weight subspace can only be present in representations $V_{\msp_{2n}}(\nu)$ with $\mu \prec \nu$. By induction we can make a linear combination with $f^{\lambda - \mu}v_{\lambda}$ such that it has weight $\mu$ and lies in $V_{\msp_{2n}}(\mu)$. Here we used the fact that the representation $\mathcal{V}_{0}(\la)$ has no multiplicities when restricted to $\msp_{2n}$. The step is finished by the spanning property of the set $f^{\bs} v_{\la}, \; \bs \in FFLV^{\bz}_{2n}(\mu)$.
\end{proof}

\begin{lem}\label{Lemma2}
The highest weight vector $v_{\la}$ is cyclic in representation $\mathcal{V}_{0}(\la)$ with respect to the action of $\msp_{2n + 1}$.
\end{lem}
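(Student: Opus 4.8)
The goal is $U(\msp_{2n+1})v_\la=\mathcal{V}_0(\la)$. Since $v_\la$ is a highest weight vector, $U(\msp_{2n+1})v_\la=U(\tilde\n^-)v_\la=:W$, which is an $\msp_{2n+1}$-submodule and in particular $\msp_{2n}$-stable. By Lemma~\ref{BranchingRule} the restriction $\mathcal{V}_0(\la)|_{\msp_{2n}}$ is multiplicity free, so $W$ is the direct sum of a subcollection of the components $V_{\msp_{2n}}(\mu)$, $\mu\prec\la$. Hence it suffices to prove that the highest weight vector $v_\mu$ of the copy of $V_{\msp_{2n}}(\mu)$ inside $\mathcal{V}_0(\la)$ lies in $W$ for every $\mu\prec\la$: then $V_{\msp_{2n}}(\mu)=U(\n^-)v_\mu\subseteq W$, and letting $\mu$ vary gives $W=\mathcal{V}_0(\la)$.

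I would prove $v_\mu\in W$ by a decreasing induction on $|\mu|$ and, for fixed $|\mu|$, a decreasing induction on the dominance order, the case $\mu=\la$ being trivial. For the step put $f^{\la-\mu}=\prod_{i=1}^n f_{i,n}^{\la_i-\mu_i}\in U(F^-)$ and look at $f^{\la-\mu}v_\la\in W$. A short weight computation with the root data of Section~1 shows this vector has $\msp_{2n}\oplus\gl_1$-weight $(\mu,|\la|-|\mu|)$, so it lies in the sum of the components $V_{\msp_{2n}}(\nu)$, $\nu\prec\la$, in which $\mu$ occurs as a weight; every such $\nu\neq\mu$ satisfies $|\nu|=|\mu|$ and is strictly larger than $\mu$ in dominance, so its component already lies in $W$ by induction. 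Subtracting the corresponding projections leaves $\mathrm{proj}_{V_{\msp_{2n}}(\mu)}(f^{\la-\mu}v_\la)\in W$, a weight-$\mu$ vector of $V_{\msp_{2n}}(\mu)$; since that weight space is one dimensional, this is a scalar multiple of $v_\mu$, and the step succeeds once we know the scalar is nonzero. Thus the whole statement reduces to
$$\mathrm{proj}_{V_{\msp_{2n}}(\mu)}\bigl(f^{\la-\mu}v_\la\bigr)\neq 0\quad\text{for all }\mu\prec\la,$$
which is precisely the hypothesis of Lemma~\ref{Lemma1}; so Lemma~\ref{Lemma2}, and through Lemma~\ref{Lemma1} the spanning half of Theorem~\ref{ExtraThm}, both rest on it.

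I expect this non-vanishing to be the main obstacle, and I would attack it inside Proctor's model of $\mathcal{V}_0(\la)$ as the trace-free subspace of $V^\la\subseteq V^{\otimes|\la|}$: take $v_\la$ to be the standard column-tableau tensor, write each $f_{i,n}=E_{2n+1,i}$ as the derivation $\sum_{k=1}^{|\la|}1^{\otimes(k-1)}\otimes E_{2n+1,i}\otimes 1^{\otimes(|\la|-k)}$, and expand $f^{\la-\mu}v_\la$ as a signed sum of tensors obtained from $v_\la$ by changing $\la_i-\mu_i$ of the letters $i$ (taken from the $i$-th row) into the letter $2n+1$. One then isolates a single leading tensor in a fixed monomial order on basis tensors, checks its coefficient cannot be cancelled, and verifies it survives both the Young symmetrizer and the projection onto $V_{\msp_{2n}}(\mu)$ — equivalently, that it is not killed by a covector that annihilates every component $V_{\msp_{2n}}(\nu)$ with $\nu>\mu$ but not $v_\mu$. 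An alternative is an induction on $n$ along the chain $\msp_{2n+1}\supset\bc^{2n}\rtimes(\msp_{2n}\oplus\gl_1)\supset\msp_{2n-1}$ together with Proctor's branching, reducing the claim to lower rank and to Theorem~\ref{EvenFFLV}.
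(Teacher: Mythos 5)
Your formal reduction is correct and closely parallels the paper's Lemma~\ref{Lemma1}: since $U(\msp_{2n+1})v_\la=U(\tilde\n^-)v_\la$ is $\msp_{2n}$-stable and the restriction is multiplicity free, cyclicity does reduce, by the induction you describe, to the single claim $\mathrm{proj}_{V_{\msp_{2n}}(\mu)}(f^{\la-\mu}v_\la)\neq 0$ for all $\mu\prec\la$. The problem is that you stop there: this non-vanishing is the entire content of the lemma once the reduction is made, and you only offer a sketch of how one might attack it (isolate a leading tensor, check it is not cancelled, check it survives the Young symmetrizer and the projection onto $V_{\msp_{2n}}(\mu)$). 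The last of these steps is genuinely hard as stated --- it requires knowing how the isotypic components $V_{\msp_{2n}}(\nu)$, $\nu>\mu$, sit inside the tensor space, which is exactly the kind of information the lemma is supposed to circumvent --- and no argument is given for it. So the proposal has a real gap, not a routine omission.

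It is also worth noting that the paper's logic runs in the opposite direction: Lemma~\ref{Lemma2} is proved by a self-contained argument that never mentions the projections, and the non-vanishing you want is then \emph{derived} from cyclicity in the proof of Theorem~\ref{ExtraThm} (write $v_\mu=\sum_j n_jf_jv_\la$ with $n_j\in U(\n^-)$, $f_j\in U(F^-)$, and observe that $v_\mu\notin\n^-\cdot U(\n^-)\mathcal{V}_0(\la)$ forces some $n_j=1$ and, by weights, $f_j=Cf^{\la-\mu}$). The paper's direct proof of cyclicity is the step you are missing: all the highest weight vectors $v_\mu$ lie in $W^{\otimes|\la|}$ for the isotropic subspace $W=\langle e_1,\dots,e_n,e_{2n+1}\rangle$; since $\omega|_W=0$, the space $W^\la=W^{\otimes|\la|}\cap\mathcal{V}_0(\la)$ equals $c_\la(W^{\otimes|\la|})$ and is an irreducible $\mgl(W)\cong\mgl_{n+1}$-module; and the lower-triangular part of $\mgl(W)$ lies inside $\msp_{2n+1}$, so $v_\la$ is already cyclic in $W^\la$ and its orbit catches every $v_\mu$. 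If you want to salvage your route, you would need to carry out the tensor computation in full (or adopt something like this $\mgl_{n+1}$ argument); as written, the proof is incomplete.
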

\begin{proof} Let $v_{\mu}$ be the highest weight vector of subrepresentation $V_{\msp_{2n}}(\mu)$ of subalgebra $\msp_{2n}$. It is enough to show that $v_{\mu}$ is contained in the orbit of $v_{\la}$. Let $W = \langle e_1, ..., e_n, e_{2n + 1} \rangle$. Note that $v_{\mu} \in W^{\otimes |\la|} \subset V^{\otimes |\la|}$, because its weight with respect to $GL(V)$ action is equal to 
$$(\mu_1, \mu_2, ..., \mu_n, 0, ..., 0, |\la| - |\mu|).$$
Introduce the subspace $$W^{\la} = W^{\otimes |\la|} \cap \mathcal{V}_{0}(\la).$$
Note that the form $\omega$ vanishes on any pair of vectors from $W$. Therefore $W^{\otimes |\la|}$ consists of trace-free elements and we can write
$$W^{\la} = W^{\otimes |\la|} \cap V^{\la}.$$
By definition $V^{\la}$ is equal to the image of $V^{\otimes |\la|}$ under the action of Young symmetrizer $c_{\la}$. We claim that $W^{\lambda} = W^{\otimes |\la|} \cap V^{\la} = c_{\la}(W^{\otimes |\la|})$. The last equality is evident from the consideration of decomposable tensors formed by basis vectors. It follows that $W^{\la} = c_{\la}(W^{\otimes |\la|})$ is isomorphic to the irreducible representation of $\mathfrak{gl}(W)$ of highest weight $\la$. However, there is a subalgebra $\g \subset \mgl(W) \subset \msp_{2n + 1}$ that contains a lower-triangular subalgebra of $\mgl(W)$. It follows that the vector $v_{\la} \in W^{\la} \subset V^{\la}$ is cyclic in $W^{\la}$ with respect to the action of $\mathfrak{g} \subset \mathfrak{gl}(W)$. We conclude that its orbit contains all vectors $v_{\mu}$ and the Lemma follows.
\end{proof}

\noindent We are ready to prove Theorem \ref{ExtraThm}.
\begin{proof}
By Lemma \ref{Lemma2} the vector $v_{\la}$ is cyclic. It vanishes under the action of $\tilde\n^{+}$, thus by commutations we find that $U(\tilde\n^{-}) v_{\la} = \mathcal{V}_{0}(\la)$. Since $$[\n^{-}, F^{-}] \subset F^{-},$$
by commutations we can obtain
$$U(\n^{-})U(F^{-}) v_{\la} = \mathcal{V}_{0}(\la).$$
Therefore $v_{\mu}$ can be written as
$$v_{\mu} = \sum_{j} n_j f_j v_{\la},\; f_j \in U(F^{-}),\; n_j \in U(\n^{-}).$$
However, the vector $v_{\mu}$ does not belong to $\n^{-} \cdot U(\n^{-}) \mathcal{V}_{0}(\la)$ by definition of the highest weight vector. Thus at least for one $j$ we have $n_j = 1$. By weight computation we find that $f_j = C f^{\la - \mu}$ for some constant $C \in \bc$. By noting that $v_{\mu} - f_j v_{\la} \in \n^{-}\cdot U(\n^{-})U(F^{-})\mathcal{V}_0(\la)$ we conclude that the projection of $f^{\la - \mu} v_{\la}$ on $V(\mu)$ is non-zero. We finish the proof by Lemma \ref{Lemma1}.
\end{proof}
\noindent As a corollary we obtain a combinatorial graded character formula.
\begin{cor}\label{SimpleqChar}
$$\ch_{q} \mathcal{V}_{0}(\la) = \sum\limits_{\substack{\tilde\mu \in \delta(\la),\\ \bs \in FFLV^{\bz}_{2n}(\la - \tilde\mu)}} e^{\la -  \wt(\tilde\mu) - \wt(\bs)} q^{\deg(\bs) + \deg(\tilde\mu)}.$$
\end{cor}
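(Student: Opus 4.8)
The plan is to obtain the formula as a direct bookkeeping consequence of the bigraded PBW basis of $\gr\mathcal{V}_0(\la)$ furnished by Theorem~\ref{ExtraThm}, using only the definition of $\ch_q$. Matching the notation of the statement, write $\deg(\tilde\mu):=\sum_{i=1}^{n}\tilde\mu_i$ and $\wt(\tilde\mu):=\sum_{i=1}^{n}\tilde\mu_i\,\al_{i,n}$ for the PBW degree and the $\tilde\h$-weight shift effected by the monomial $f^{\tilde\mu}$. By definition
$$\ch_q\mathcal{V}_0(\la)=\sum_{\nu,\,s}\dim\bigl(V^{s}(\la)^{\nu}/V^{s-1}(\la)^{\nu}\bigr)\,e^{\nu}q^{s},$$
so $\ch_q\mathcal{V}_0(\la)$ is the bivariate generating function of \emph{any} basis of $\gr\mathcal{V}_0(\la)$ whose elements are homogeneous both for the $\tilde\h$-weight grading and for the PBW grading. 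Hence it suffices to check that the basis $\{f^{\bs}f^{\tilde\mu}v_\la:\tilde\mu\in\delta(\la),\ \bs\in FFLV^{\bz}_{2n}(\la-\tilde\mu)\}$ of Theorem~\ref{ExtraThm} has this property and to read off the two degrees of each vector.

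First I would pass to $\gr\mathcal{V}_0(\la)=\gr\bigl(U(\tilde\n^-)v_\la\bigr)$, where $f^{\bs}f^{\tilde\mu}v_\la$ becomes, up to a nonzero scalar depending only on the fixed orderings of the root vectors, the image of the monomial $\prod_{\al\in R^{+}}f_\al^{s_\al}\cdot\prod_{i=1}^{n}E_{2n+1,i}^{\tilde\mu_i}\in S(\tilde\n^-)$ applied to $v_\la$. Such a monomial is homogeneous of PBW degree $\deg(\bs)+\deg(\tilde\mu)$: every generator of $\tilde\n^-$ raises the PBW filtration by exactly one, and by the linear independence assertion of Theorem~\ref{ExtraThm} the image in $\gr$ of a length-$\ell$ monomial applied to $v_\la$ is a nonzero element of filtration degree exactly $\ell$, not less. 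It is also $\tilde\h$-homogeneous of weight $\la-\wt(\bs)-\wt(\tilde\mu)$, since $f_\al$ has $\tilde\h$-weight $-\al$ and $E_{2n+1,i}=f_{\al_{i,n}}$ has $\tilde\h$-weight $-\al_{i,n}$; here one uses that $v_\la$, sitting in the summand $V_{\msp_{2n}}(\la)\otimes\bc_0$ of Lemma~\ref{BranchingRule}, has $\tilde\h$-weight $\la$.

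Second, I would substitute these bidegrees into the generating function and sum over the index set of Theorem~\ref{ExtraThm}, obtaining
$$\ch_q\mathcal{V}_0(\la)=\sum_{\tilde\mu\in\delta(\la)}\ \sum_{\bs\in FFLV^{\bz}_{2n}(\la-\tilde\mu)}e^{\la-\wt(\tilde\mu)-\wt(\bs)}\,q^{\deg(\bs)+\deg(\tilde\mu)},$$
which is the asserted identity. I do not expect a genuine obstacle: all the substance has been absorbed into Theorem~\ref{ExtraThm}, and the only step beyond bookkeeping — that passing to $\gr$ replaces each $f^{\bs}f^{\tilde\mu}v_\la$ by an honest bi-homogeneous element of the expected PBW degree — is exactly the linear independence already proved there. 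As a consistency check one may note that combining this corollary with the character formula of Theorem~\ref{EvenFFLV} applied to each $\la-\tilde\mu$ yields the graded branching identity $\ch_q\mathcal{V}_0(\la)=\sum_{\tilde\mu\in\delta(\la)}q^{\deg(\tilde\mu)}e^{-\wt(\tilde\mu)}\,\ch_q V_{\msp_{2n}}(\la-\tilde\mu)$, a $q$-refinement of Lemma~\ref{BranchingRule}.
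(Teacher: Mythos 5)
Your proposal is correct and matches the paper's intent exactly: the paper states this corollary without proof as immediate bookkeeping from the bihomogeneous basis of Theorem~\ref{ExtraThm}, which is precisely the argument you spell out (including the correct reading of $\deg(\tilde\mu)$ and $\wt(\tilde\mu)$ and the appeal to the basis being a basis of $\gr\mathcal{V}_0(\la)$, not just of $\mathcal{V}_0(\la)$). No gaps.
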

\noindent Another corollary is Theorem \ref{OddSubmodule}.
\begin{proof}
We have an equality of $S(\n^{-})$-modules
$$\gr \mathcal{V}_{0}(\la) = \bigoplus_{\nu \prec \la + \mu} \gr V_{\msp_{2n}}(\nu).$$
By Theorem \ref{ExtraThm} the projection of the vector $f^{\tilde\la}v_{\la} \otimes f^{\tilde\mu}v_{\mu}$ on $\gr V_{\msp_{2n}}(\la - \tilde\la) \otimes \gr V_{\msp_{2n}}(\mu - \tilde\mu)$ is non-zero and equal to a multiple of $v_{\la - \tilde\la} \otimes v_{\mu - \tilde\mu}$. By Theorem \ref{EvenSubmodule} under the action of $S(\mathfrak{n}^{-})$ the vector $v_{\lambda - \tilde\lambda} \otimes v_{\mu - \tilde\mu}$ generates a submodule isomorphic to $\gr V_{\msp_{2n}}(\lambda + \mu - \tilde\lambda - \tilde\mu) = \gr V_{\msp_{2n}}(\lambda + \mu - \tilde\nu)$. Since $f^{\tilde\nu}(v_{\lambda} \otimes v_{\mu})$ is a sum of linearly independent vectors
$$f^{\tilde\nu}(v_{\lambda} \otimes v_{\mu}) = \sum\limits_{\tilde\lambda + \tilde\mu = \tilde\nu} C_{\tilde\lambda, \tilde\mu} f^{\tilde\lambda}v_{\lambda} \otimes f^{\tilde\mu}v_{\mu},$$
for some constants $C_{\tilde\lambda, \tilde\mu}$, then $f^{\tilde\nu}(v_{\lambda} \otimes v_{\mu})$ also generates a submodule isomorphic to $\gr V_{\msp_{2n}}(\lambda + \mu - \tilde\nu)$. It follows that we can combine the results over all $\tilde\nu \in \delta(\la + \mu)$ and conclude that $v_{\lambda} \otimes v_{\mu}$ generates $S(\tilde\n^{-})$-module $\text{gr } \mathcal{V}_{0}(\lambda + \mu).$
\end{proof}

\section{Main Theorems}
In this section we will prove Theorems \ref{OddFFLV} and \ref{OddMinkowski}. For Theorem \ref{OddFFLV} we will follow the proof strategy of the Theorem \ref{EvenFFLV} from \cite{FFL2} with certain adjustments. Since the main part of the argument remains the same, we do not repeat all the details, referring to \cite{FFL2} where possible.

By Theorem \ref{ExtraThm} we have $S(\tilde\n^{-}) v_{\la} = \gr \mathcal{V}_{0}(\la)$. Therefore we can write $\gr \mathcal{V}_{0}(\la)$ as $S(\tilde\n^{-}) / I(\la)$ for some ideal $I(\la)$. By examining the weights we see that the vector $v_{\la} \in \mathcal{V}_{0}(\lambda)$ vanishes under the action of certain operators,
$$ f_{i, j}^{m_i + ... + m_j + 1} v_{\la} = 0, \;\; f_{i, \ol{i}}^{m_i + ... + m_n + 1}v_{\la} = 0.$$
It follows that they belong to $I(\la)$. Let
$$R = \text{span}\{ f_{i, j}^{m_i + ... + m_j + 1}, \; 1 \leq i \leq j \leq n - 1, \; f_{i, \ol{i}}^{m_i + ... + m_n + 1}, \; 1 \leq i \leq n \}.$$
As a byproduct, during the proof of Theorem \ref{OddFFLV}, we will obtain the following statement.
\begin{lem}
$I(\la) = S(\tilde\n^{-})(U(\tilde\n^{+}) \circ R).$
\end{lem}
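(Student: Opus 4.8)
The plan is to obtain the Lemma as a byproduct of the proof of Theorem~\ref{OddFFLV}, following the strategy of \cite{FFL2}. Write $\hat I(\la):=S(\tilde\n^-)\bigl(U(\tilde\n^+)\circ R\bigr)$. The inclusion $\hat I(\la)\subseteq I(\la)$ is the easy half: the generators $f_{i,j}^{m_i+\dots+m_j+1}$ and $f_{i,\ol i}^{m_i+\dots+m_n+1}$ lie in $I(\la)$ by the weight computation already recorded, and $U(\tilde\n^+)\circ R$ still annihilates $v_\la$ in $\gr\mathcal V_0(\la)$ for the same reason as in \cite{FFL2} (one uses $\tilde\n^+ v_\la=0$ together with the description of the operators $e\circ(-)$), while $\hat I(\la)$ is by construction an $S(\tilde\n^-)$-ideal. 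This yields a surjection $S(\tilde\n^-)/\hat I(\la)\twoheadrightarrow S(\tilde\n^-)/I(\la)=\gr\mathcal V_0(\la)$, and the Lemma is equivalent to the assertion that this surjection is an isomorphism, i.e.\ to an equality of dimensions in each weight space.

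The substance of the argument is the straightening step proving Theorem~\ref{OddFFLV}: the PBW monomials $f^{\bs}$, $\bs\in FFLV^{\bz}_{2n+1}(\la)$, span $S(\tilde\n^-)/\hat I(\la)$. I would prove this exactly as in \cite{FFL2}. The crucial input is that for every symplectic Dyck path $p\in\D_{2n+1}$ the element $\bigl(\sum_{\al\in p}f_\al\bigr)^{b_p+1}$ lies in $U(\tilde\n^+)\circ R$, hence in $\hat I(\la)$; here $b_p=m_i+\dots+m_j$ or $b_p=m_i+\dots+m_n$ according to whether $p$ ends at $\al_{j,j}$ or $\al_{j,\ol j}$. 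This is the place where the new root vectors $f_{i,n}=E_{2n+1,i}$ and the new segments $\al_{i,i}\to\al_{i,i+1}\to\dots\to\al_{i,n}$ of the $\msp_{2n+1}$ root poset must be handled, by exhibiting the sequence of operators $e_\beta\circ(-)$ taking one of the generators of $R$ to such a power of $\sum_{\al\in p}f_\al$, up to lower-order terms. Expanding the relation in the PBW basis, its leading term with respect to a suitable monomial order (the one used in \cite{FFL2}) is a scalar multiple of $\prod_{\al\in p}f_\al^{t_\al}$ with $\sum_\al t_\al=b_p+1$; an induction on the order then rewrites any PBW monomial violating a Dyck-path inequality as a combination of strictly smaller monomials, and the process terminates on FFLV monomials. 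Since only the underlying poset and the right-hand side constants differ from the $\msp_{2n}$ case, and the extra vertices $\al_{i,n}$ enter the combinatorics as an ordinary segment, the bookkeeping of \cite{FFL2} carries over with only notational changes.

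Granting this, $\dim\bigl(S(\tilde\n^-)/\hat I(\la)\bigr)$ is at most the number of lattice points of $FFLV_{2n+1}(\la)$ in each weight space. Theorem~\ref{ExtraThm} computes $\dim\mathcal V_0(\la)$ (weight by weight) as $\sum_{\tilde\mu\in\delta(\la)}\#FFLV^{\bz}_{2n}(\la-\tilde\mu)$, so it suffices to match this with $\#FFLV^{\bz}_{2n+1}(\la)$; I would establish the combinatorial identity $\#FFLV^{\bz}_{2n+1}(\la)=\sum_{\tilde\mu\in\delta(\la)}\#FFLV^{\bz}_{2n}(\la-\tilde\mu)$ by an explicit weight-preserving bijection between lattice points. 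With this, all the displayed quantities are equal, the surjection is an isomorphism in every weight space, so $\hat I(\la)=I(\la)$, and simultaneously the FFLV monomials are forced to be a basis, which is Theorem~\ref{OddFFLV}. (Alternatively one can bypass the combinatorial identity and obtain the matching lower bound $\dim\mathcal V_0(\la)\ge\#FFLV^{\bz}_{2n+1}(\la)$ directly, as in \cite{FFL2}, from the submodule property of Theorem~\ref{OddSubmodule} and a Minkowski-sum argument by induction on $\la$, which yields the same squeeze.)

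The main obstacle is the straightening step: producing all Dyck-path relations through the modified root poset — especially those passing through the new vertices $\al_{i,n}$ — from the $U(\tilde\n^+)$-action on $R$, and verifying that they still suffice to reduce an arbitrary monomial to an FFLV monomial. This is essentially the only place where the argument departs from \cite{FFL2}. A secondary technical point is the lattice-point identity behind the dimension count: the example $\la=2\omega_2$ for $\msp_5$ shows that $FFLV_{2n+1}(\la)$ has lattice points with $s_{i,n}>m_i$, so the decomposition over $\delta(\la)$ cannot be read off from the coordinates $s_{i,n}$ directly and the bijection must be built with more care.
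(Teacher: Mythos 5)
Your overall route coincides with the paper's: the lemma is extracted as a byproduct of the proof of Theorem~\ref{OddFFLV} by squeezing $S(\tilde\n^{-})/\hat I(\la)$, where $\hat I(\la)=S(\tilde\n^{-})(U(\tilde\n^{+})\circ R)\subseteq I(\la)$, between the span of the FFLV monomials (obtained from a straightening law whose relations all lie in $\hat I(\la)$) and $\dim\mathcal{V}_{0}(\la)$. Two points of divergence are worth recording. First, the dimension count: you propose either a new weight-preserving bijection $\#FFLV^{\bz}_{2n+1}(\la)=\sum_{\tilde\mu\in\delta(\la)}\#FFLV^{\bz}_{2n}(\la-\tilde\mu)$ or a Minkowski-sum induction; the paper sidesteps both by quoting $\dim\mathcal{V}_{0}(\la)=|FFLV^{\bz}_{2n+1}(\la)|$ directly from Proctor's Gelfand--Zetlin-type formula \cite{Proctor} together with the marked order/marked chain transfer of \cite{ABS}, which is less work and avoids the delicate bijection that you yourself flag as nontrivial. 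Second, the step you defer --- producing the relations along Dyck paths through the new vertices $\al_{i,n}$ --- is precisely where the paper's only genuinely new input sits: a single additional differential operator $\pa_{1,n}=\ad E_{2n+1,n+1}$, with $\pa_{1,n}f_{1,\ol{1}}=f_{1,n}$, inserted into $\delta_2$ as the factor $\pa_{1,n}^{s_{\bullet,n}}$; since $\delta_3$ subsequently kills $f_{1,n}$, the leading-term bookkeeping of \cite{FFL2} is unaffected, and the preliminary reductions (paths starting at $\al_{i,i}$ with $i>1$ handled by induction on $n$, paths ending at $\al_{j,j}$ with $j<n$ reduced to the $\msl_n$ straightening, paths ending at $\al_{n,n}$ extended to end at $\al_{n,\ol{n}}$) dispose of the remaining cases. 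So your expectation that the bookkeeping of \cite{FFL2} carries over with only local changes is borne out by the paper, but as written your proposal stops short of exhibiting the one new operator that makes the relations through $\al_{i,n}$ actually available, and it substitutes unnecessary combinatorial work for the citation that closes the dimension count.
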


We use differential operators $\pa_{\al}$ defined for $\msp_{2n}$ roots $\al \in R^{+}$ as
$$
\pa_\al f_\beta =
\begin{cases}
f_{\beta - \al},\  \text{ if }  f_{\beta - \al} \in \tilde\n^{-},\\
0,\ \text{ otherwise}.
\end{cases}
$$
We also introduce one new differential operator
$$\pa_{1, n} = \ad\; E_{2n + 1, n + 1},$$
with the key property $\pa_{1, n} f_{1, \ol{1}} = [E_{2n + 1, n + 1}, E_{n + 1, 1}] = E_{2n + 1, 1} = f_{1, n}$.
We also set up the notation,
$$s_{i, \bullet} = \sum_{j = i}^{n} s_{i, j} + \sum_{j = i}^{n} s_{i, \ol{j}}, \;\; s_{\bullet, i} = \sum_{j = 1}^{i} s_{j, i},$$
$$d(\bs) = (s_{n, \bullet}, s_{n - 1, \bullet}, ..., s_{1, \bullet}).$$
Write $d(\bs) > d(\bt)$ if the list $d(\bs)$ is greater than $d(\bt)$ according to the standard lexicographical order. Define the order on the variables $f_{\al} \in S(\tilde\n^{-})$.
\begin{equation*}
\begin{array}{rcl}
& f_{n, \ol{n}} > f_{n, n} > & \\
f_{n - 1,\overline{n - 1}} & > f_{n - 1, \ol{n}} > f_{n - 1, n} > & f_{n - 1, n - 1} > \\
f_{n - 2, \overline{n - 2}} > f_{n - 2, \overline{n - 1}} & > f_{n - 2, \ol{n}} > f_{n - 2, n} > & f_{n - 2, n - 1} > f_{n - 2, n - 2} > \\
\ldots & > \ldots > & \ldots >\\
f_{1, \overline{1}} > f_{1, \overline{2}} > \ldots & > f_{1, \ol{n}} > f_{1, n} > & \ldots > f_{1,2} > f_{1,1}.
\end{array}
\end{equation*}
The order $>$ extends to a monomial order on $S(\tilde\n^{-})$.

We will use one more important monomial order on $S(\tilde\n^{-})$. For $\bs, \bt \in \bz_{\geq 0}^{n(n + 1)}$ denote $\bs \succ \bt$ if one of the following conditions hold:
\begin{itemize}
\item[{\it a)}] $\deg(\bs) > \deg(\bt)$;
\item[or {\it b)}] total degree is the same, but $d(\bs) < d(\bt)$;
\item[or {\it c)}] total degree is the same, $d(\bs) = d(\bt)$, but $f^\bs > f^\bt$.
\end{itemize}
We are ready to prove Theorem \ref{OddFFLV}.
\begin{proof}
From the papers \cite{Proctor} and \cite{ABS} we know that 
$$\dim \mathcal{V}_{0}(\la) = |FFLV_{2n + 1}^{\bz}(\la)|.$$
Therefore, it is enough to prove that the set of vectors
$$f^{\bs} v_{\la}, \; \bs \in FFLV_{2n + 1}^{\bz}(\la),$$
spans $\gr \mathcal{V}_{0}(\la)$. During the proof of Theorem \ref{ExtraThm} we have obtained 
$$S(\tilde\n^{-}) v_{\la} = \gr \mathcal{V}_{0}(\la).$$
As shown in \cite{FFL2}, it suffices to prove that any element $f^{\bs} \in S(\tilde\n^{-})$ with $\bs \in \bz_{\geq 0}^{n(n + 1)} \setminus FFLV_{2n + 1}^{\bz}(\la)$ has a \textit{straightening law}:
$$f^{\bs} + \sum_{\bs \succ \bt} f^{\bt} \in I(\la),$$
where $\succ$ is a monomial order.

Before presenting a straightening law we consider a sequence of reductions. Since $\bs \in \bz_{\geq 0}^{n(n + 1)} \setminus FFLV_{2n + 1}^{\bz}(\la)$, then for some symplectic Dyck path $p = (p(0), ..., p(k))$ the inequality is violated. Since the order $\succ$ is monomial, it is enough to find straightening law for $\bs$ restricted to the support of path $p$. Suppose that path $p$ starts at $p(0) = \al_{i, i}$. If $i \neq 1$, then we can apply induction on $n$ by using the inclusion $\msp_{2(n - 1) + 1} \subset \msp_{2n + 1}$. From now on we assume that $p(0) = \al_{1, 1}$. Suppose that path $p$ ends at $p(k) = \al_{j, j}$. Note that any path ending at $\al_{n, n}$ can be extended to end at $\al_{n, \ol{n}}$ without weakening the inequality. Thus we can assume that $j \neq n$. According to \cite{FFL2}, it follows that the straightening law of $\msl_{n}$ can be applied, since the support of $p$ contains only roots of the subalgebra $\msl_{n}$. Therefore the case $p(k) = \al_{j, j}$ is solved. From now on we assume that $p(k) = \al_{j, \ol{j}}$.

To sum up the reductions of the previous paragraph, it is enough to consider lattice points $\bs$ with the support on a single path $p$ that starts at $p(0) = \al_{1, 1}$ and ends at $p(k) = \al_{j, \ol{j}}$. Moreover, the inequality 
$$\sum_{\al \in p} s_{\al} \leq m_1 + ... + m_n$$
is violated. Therefore $\Sigma = \sum_{\al \in p} s_{\al} \geq m_1 + ... + m_n + 1$. And it follows that $f_{1, \ol{1}}^{\Sigma} \in I(\la)$. Define the differential operators
$$
\Delta_1 := \pa_{1, i - 1}^{s_{\bullet,\bar i} + s_{i,\bullet}}
\underbrace{
\pa_{i + 1, \ol{i + 1}}^{s_{\bullet, i}} \ldots \pa_{n, \ol{n}}^{s_{\bullet, n - 1}}
}_{\delta_3} \cdot
$$ 
$$
\cdot\underbrace{
\underline{\pa_{1, n}^{s_{\bullet, n}}} \pa_{1,n-1}^{s_{\bullet, n - 1} + s_{\bullet, \ol{n}}}
\ldots
\pa_{1,i}^{s_{\bullet,i}+s_{\bullet,\ol{i+1}}}
}_{\delta_2}
\underbrace{
\pa_{1,\bar i}^{s_{\bullet,i-1}}\dots \pa_{1,\bar 3}^{s_{\bullet,2}}\pa_{1,\bar 2}^{s_{\bullet,1}}
}_{\delta_1}
$$
(so $\Delta_1 := \pa_{1, i - 1}^{s_{\bullet, \ol{i}} + s_{i, \bullet}} \delta_3 \delta_2 \delta_1$)
and
$$
\Delta_2:=\pa_{1,1}^{s_{2,\bullet}}\pa_{1,2}^{s_{3,\bullet}}\dots \pa_{1,i-2}^{s_{i-1,\bullet}}.
$$
Here we underlined the only difference from operators defined in \cite{FFL2}, the term $\pa_{1, n}^{s_{\bullet, n}}$. We claim that the element $\Delta_2 \Delta_1 f^{\Sigma}_{1, \ol{1}}$ provides a straightening law
$$\Delta_2 \Delta_1 f^{\Sigma}_{1, \ol{1}} = c_{\bs} f^{\bs} + \sum_{\bs \succ \bt} c_{\bt} f^{\bt} \in I(\la).$$
The following argument is identical to the one presented in \cite{FFL2}. We can show by induction that
$$\delta_1 (f_{1, \ol{1}}) = f^{s_{\bullet, 1}}_{1, 1} f^{s_{\bullet, 2}}_{1, 2} ... f^{s_{\bullet, i - 1}}_{1, i - 1} f^{\Sigma - s_{\bullet, 1} - ... - s_{\bullet, i - 1}}_{1, \ol{1}}.$$
By noticing that all operators from $\pa_{1, i - 1}^{s_{\bullet, \ol{i}} + s_{i, \bullet}} \delta_3\delta_2$ kill the terms $f_{1, j},\; j \leq i - 1$, we write
$$\Delta_1 (f_{1, \ol{1}}) = f^{s_{\bullet, 1}}_{1, 1} f^{s_{\bullet, 2}}_{1, 2} ... f^{s_{\bullet, i - 1}}_{1, i - 1} \pa_{1, i - 1}^{s_{\bullet, \ol{i}} + s_{i, \bullet}} \delta_3\delta_2 (f^{\Sigma - s_{\bullet, 1} - ... - s_{\bullet, i - 1}}_{1, \ol{1}}).$$
Next we obtain
$$\delta_2 (f^{\Sigma - s_{\bullet, 1} - ... - s_{\bullet, i - 1}}_{1, \ol{1}}) = f^{s_{\bullet, \ol{i}} + s_{\bullet, i - 1}}_{1, \ol{i + 1}} ... f^{s_{\bullet, \ol{n}} + s_{\bullet, n - 1}}_{1, \ol{n}} f^{s_{\bullet, n}}_{1, n} f^{s_{\bullet, \ol{i}}}_{1, \ol{1}} + $$
$$ + \sum_{\bt} (\text{terms with non-zero entries in } \bt \text{ below row } i).$$
Note that no operator from $\Delta_2, \delta_3, \pa_{1, i - 1}$ can decrease the sum of entries of $\bt$ in rows below $i$. Since the monomial order $\succ$ is smaller for elements $f^{\bt}$ with larger $d(\bt)$, we conclude that terms $f^{\bt}$ with non-zero entries below row $i$ will remain smaller than $f^{\bs}$ after the action of all operators.

The operator $\delta_3$ kills the term $f_{1, n}$. Therefore the result of its action on $\delta_2 (f^{\Sigma - s_{\bullet, 1} - ... - s_{\bullet, i - 1}}_{1, \ol{1}})$ is analogous to the $\msp_{2n}$ case:
$$\delta_3 \delta_2 (f^{\Sigma - s_{\bullet, 1} - ... - s_{\bullet, i - 1}}_{1, \ol{1}}) = f^{s_{\bullet, i}}_{1, i} f^{s_{\bullet, i + 1}}_{1, i + 1} ... f^{s_{\bullet, n}}_{1, n}   f^{s_{\bullet, \ol{n}}}_{1, \ol{n}} ... f^{s_{\bullet, \ol{i + 1}}}_{1, \ol{i + 1}} f^{s_{\bullet, \ol{i}}}_{1, \ol{1}} + $$
$$ + \sum_{\bt} (\text{terms with non-zero entries in } \bt \text{ below row } i).$$
The operator $\pa_{1, i - 1}$ either moves the element from row 1 to row $i$ or it changes $f_{1, \ol{1}}$ to $f_{1, \ol{i}}$. To maximize the monomial order $\succ$ we have to minimize the sum of entries in row $i$ (and keep entries in rows below $i$ equal to $0$), which is achieved when $\pa_{1, i - 1}$ is applied $s_{\bullet, \ol{i}}$ times to $f^{s_{\bullet, \ol{i}}}_{1, \ol{1}}$. Denote 
$$f^{\bs'} = f^{s_{\bullet, 1}}_{1, 1} f^{s_{\bullet, 2}}_{1, 2} ... f^{s_{\bullet, q} - s_{i, q}}_{1, q} f^{s_{i, q}}_{i, q} ... f^{s_{i, \ol{i}}}_{i, \ol{i}},$$
where $q = \min \{ j \vert \; \al_{i, j} \in p\}$ from the poset $1 < 2 < ... < n < \ol{n} < ... < \ol{1}$. Then we can show that
$$\Delta_1(f_{1, \ol{1}}^{\Sigma}) = f^{\bs'} + $$
$$ + \sum_{\bs' \succ \bt}(\text{terms with }  d(\bt) > d(\bs') \text{ or with } f_{i, i}^{t_{i, i}}f_{i, i + 1}^{t_{i, i + 1}} ... f_{i, \ol{i}}^{t_{i, \ol{i}}} < f_{i, i}^{s_{i, i}} ... f_{i, \ol{i}}^{s_{i, \ol{i}}}).$$
Finally, when applied to $f^{\bs'}$, the operator $\Delta_2$ moves elements from top row 1 to rows between 1 and $i$. To maximize the monomial order $\succ$ it will leave non-zero entries on the support of the considered path as shown in \cite{FFL2}. 
\end{proof}
\begin{rem}
It is not hard to show that simultaneous swap of the roots $\al_{i, n}$ and $\al_{i, \ol{n}}$ for all $i$ (and of the elements $f_{i, n}$ and $f_{i, \ol{n}}$ for all $i$) in the root poset definition also gives a valid FFLV polytope and PBW bases. That means, we can instead define
$$f_{i, n} = f_{\al_i + ... + \al_n}, \;\; f_{i, \ol{n}} = E_{2n + 1, i},$$
but we have to modify the operators $\Delta_1, \Delta_2$ accordingly.
\end{rem}
\begin{rem}
Note that specializing the argument to the case $s_{i, n} = 0$ for all $i$ gives a basis of subrepresentation $V_{\msp_{2n}}(\la)$  and thus proves Theorem \ref{EvenFFLV}.
\end{rem}
\begin{cor}
$\ch_{q} \mathcal{V}_{0}(\la) = \sum\limits_{\bs \in FFLV^{\bz}_{2n + 1}(\la)} e^{\la - \wt(\bs)} q^{\deg(\bs)}.$
\end{cor}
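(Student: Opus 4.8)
The plan is to deduce this directly from Theorem~\ref{OddFFLV} together with the compatibility of the PBW filtration with the weight decomposition. Recall that by definition
$$\ch_{q}\mathcal{V}_{0}(\la)=\sum_{\mu,s}\dim\bigl(V^{s}(\la)^{\mu}/V^{s-1}(\la)^{\mu}\bigr)\,e^{\mu}q^{s},$$
and that $\gr\mathcal{V}_{0}(\la)$ is the direct sum of these finite-dimensional pieces indexed by a weight $\mu$ and a PBW-degree $s$. So it suffices to exhibit a basis of $\gr\mathcal{V}_{0}(\la)$ adapted to this bigrading and to count the cardinality of each graded component.

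First I would record, for each $\bs=(s_{\al})_{\al}\in\bz_{\geq 0}^{n(n+1)}$, the two invariants of the PBW monomial $f^{\bs}=\prod_{\al}f_{\al}^{s_{\al}}\in U(\tilde\n^{-})$. It is a product of $\sum_{\al}s_{\al}=\deg(\bs)$ root vectors, hence its image in $S(\tilde\n^{-})$, and therefore the class of $f^{\bs}v_{\la}$ in $\gr\mathcal{V}_{0}(\la)$, is homogeneous of PBW-degree exactly $\deg(\bs)$. Moreover each $f_{\al}$ is a $\tilde\h$-weight vector of weight $-\al$: this holds for the root vectors $f_{\al}$ of $\msp_{2n}$, and equally for the new vectors $f_{i,n}=E_{2n+1,i}$ attached in Section~1 to the roots $\al_{i,n}=\al_{i}+\dots+\al_{n}-\tilde\al_{n+1}$. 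Consequently $f^{\bs}v_{\la}$ has $\tilde\h$-weight $\la-\sum_{\al}\al\,s_{\al}=\la-\wt(\bs)$, so the class of $f^{\bs}v_{\la}$ in $\gr\mathcal{V}_{0}(\la)$ lies in the summand indexed by $(\mu,s)=(\la-\wt(\bs),\deg(\bs))$.

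Next I would invoke Theorem~\ref{OddFFLV}, which asserts that $\{f^{\bs}v_{\la}:\bs\in FFLV^{\bz}_{2n+1}(\la)\}$ is a basis of $\gr\mathcal{V}_{0}(\la)$. Since this basis is adapted to the bigrading above, the vectors falling into a fixed piece $V^{s}(\la)^{\mu}/V^{s-1}(\la)^{\mu}$ form a basis of it, whence
$$\dim\bigl(V^{s}(\la)^{\mu}/V^{s-1}(\la)^{\mu}\bigr)=\#\{\bs\in FFLV^{\bz}_{2n+1}(\la):\wt(\bs)=\la-\mu,\ \deg(\bs)=s\}.$$
Substituting this into the formula for $\ch_{q}\mathcal{V}_{0}(\la)$ and replacing the sum over $(\mu,s)$ by the sum over $\bs\in FFLV^{\bz}_{2n+1}(\la)$ yields exactly $\sum_{\bs}e^{\la-\wt(\bs)}q^{\deg(\bs)}$.

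There is no serious obstacle: the statement is genuinely a corollary. The only points requiring care are bookkeeping, namely checking that the weight of each root vector occurring in $f^{\bs}$ is minus the corresponding root (immediate from the root assignments of Section~1, in particular for the vectors $f_{i,n}$), and using the fact that Theorem~\ref{OddFFLV} produces a basis of the \emph{associated graded} space rather than merely of $\mathcal{V}_{0}(\la)$ — it is precisely this graded statement that makes the exponent $q^{\deg(\bs)}$ exact rather than an upper bound. As a consistency check, setting $q=1$ collapses the formula to $\ch\mathcal{V}_{0}(\la)=\sum_{\bs}e^{\la-\wt(\bs)}$, which agrees with the branching description of Lemma~\ref{BranchingRule} and with Corollary~\ref{SimpleqChar}.
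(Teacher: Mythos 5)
Your argument is correct and is exactly the (implicit) reasoning behind the paper's statement: the paper gives no separate proof, treating the formula as an immediate consequence of Theorem~\ref{OddFFLV} via the compatibility of the monomial basis with the weight and PBW-degree bigrading, which is precisely what you spell out. No issues.
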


Finally, we show that Theorem \ref{OddMinkowski} is a corollary of Theorem \ref{EvenMinkowski}.
\begin{proof}
Let $\la = (\la_1, ..., \la_n, 0)$ be the weight of $\msp_{2n + 2}$. Consider the polytope $P_{2n + 1}(\la)$ defined as the the intersection of the polytope $FFLV_{2n + 2}(\la)$ with the subspace $\{ s_{i, \ol{n + 1}} = 0, \; i = 1, 2, ..., n + 1\}$,
$$P_{2n + 1}(\la) = FFLV_{2n + 2}(\la) \cap \{ s_{i, \ol{n + 1}} = 0 \; \forall i\}.$$
Convex polytopes $P_{2n + 1}(\la)$ and $FFLV_{2n + 1}(\la)$ are equal as polytopes in $\R^{n(n + 1)}$. By Theorem \ref{EvenMinkowski} and the fact that $s_{i, \ol{n + 1}} \geq 0 \; \forall i$ we get
$$P_{2n + 1}^{\bz}(\la) + P_{2n + 1}^{\bz}(\mu) = P_{2n + 1}^{\bz}(\la + \mu),$$
which translates to the equality
$$FFLV_{2n + 1}^{\bz}(\la) + FFLV_{2n + 1}^{\bz}(\mu) = FFLV_{2n + 1}^{\bz}(\la + \mu).$$
\end{proof}

\printbibliography
\end{document}